
\documentclass[12pt, twoside]{article}
\usepackage{amsmath,amsthm,amssymb}
\usepackage{times}
\usepackage{enumerate}

\pagestyle{myheadings}
\def\titlerunning#1{\gdef\titrun{#1}}
\makeatletter
\def\author#1{\gdef\autrun{\def\and{\unskip, }#1}\gdef\@author{#1}}
\def\address#1{{\def\and{\\\hspace*{18pt}}\renewcommand{\thefootnote}{}%
\footnote {#1}}%
\markboth{\autrun}{\titrun}}
\makeatother
\def\email#1{e-mail: #1}
\def\subjclass#1{{\renewcommand{\thefootnote}{}%
\footnote{\emph{Mathematics Subject Classification (2010):} #1}}}
\def\keywords#1{\par\medskip
\noindent\textbf{Keywords.} #1}


\newtheorem{thm}{Theorem}[section]
\newtheorem{cor}[thm]{Corollary}

\newtheorem{que}[thm]{Question}


\newtheorem*{xyzthm}{XYZ Theorem}


\theoremstyle{definition}
\newtheorem{defin}[thm]{Definition}
\newtheorem{rem}[thm]{Remark}



\numberwithin{equation}{section}

\frenchspacing

\textwidth=15cm
\textheight=23cm
\parindent=16pt
\oddsidemargin=-0.5cm
\evensidemargin=-0.5cm
\topmargin=-0.5cm




\begin{document}


\baselineskip=17pt


\titlerunning{JEMS template}

\title{Resolvent of the generator of the $C_0$-group with non-basis family of eigenvectors and sharpness of the XYZ theorem}

\author{Grigory M. Sklyar
\and
Vitalii Marchenko$^{*}$}

\date{26.07.2018}

\maketitle

\address{G.M. Sklyar: Institute of Mathematics, University of Szczecin, Wielkopolska 15, 70451, Szczecin, Poland; \email{sklar@univ.szczecin.pl}
\and
V. Marchenko$^{*}$: B. Verkin Institute for Low Temperature Physics and Engineering of the
National Academy of Sciences of Ukraine, Mathematical Division, Prospekt Nauky
47, 61103, Kharkiv, Ukraine; \email{v.marchenko@ilt.kharkov.ua}\newline
$^{*}$ - Corresponding author}

\subjclass{Primary 47A10; Secondary 47D06}


\begin{abstract}
The paper presents an explicit form of the resolvent for the class of generators of $C_0$-groups with purely imaginary eigenvalues, clustering at $i\infty$,
and complete minimal non-basis family of eigenvectors, constructed recently by the authors in~\cite{Sklyar3}.
The growth properties of the resolvent are described. The discrete Hardy inequality serves as the cornerstone for the proofs of the corresponding results.
Moreover, it is shown that the main result on the Riesz basis property for invariant subspaces of the generator of the $C_0$-group, obtained a decade ago by G.Q.~Xu, S.P.~Yung and H.~Zwart in~\cite{Xu},~\cite{Zwart}, is sharp.
\keywords{Resolvent, eigenvalues, XYZ theorem, eigenvectors, generator of the $C_0$-group, nonselfadjoint operator, spectral projection, Hardy inequality, polynomially bounded semigroup}
\end{abstract}
\normalsize
\section{Introduction}

Problems of spectral theory for nonselfadjoint (NSA) operators attract more and more growing interest of experts in different fields of mathematics and natural sciences,
see, e.g.,~\cite{Bagarello}, \cite{Davies1}, \cite{Davies2}, \cite{Davies3}, \cite{Henry1}, \cite{Henry2}, \cite{Henry3} \cite{Mityagin}, \cite{Sklyar3}, \cite{Xu}, \cite{Zwart} and the references therein.
This is primarily caused by the recent progress in theoretical physics of non-Hermitian systems~\cite{Bagarello} on the one hand, and, on the other, by the fact that many mathematical models of dynamical processes lead to the study of linear evolution equations
\begin{equation}\label{Cauchy problem}
    \left \{
\begin{array}{l}
\dot{x}(t)=Ax(t),\quad t \geq 0,\\
x(0)=x_0\in H,\\
\end{array}
\right .
\end{equation}
in Hilbert spaces $H$ with unbounded NSA operator $A$.

In last years NSA Schr\"{o}dinger operators are studied very intensively, see~\cite{Davies3}, \cite{Davies4}, \cite{Henry1}, \cite{Henry2},
\cite{Henry3}, \cite{Mityagin} and, especially,~\cite{Bagarello}, \cite{Davies1}, \cite{Davies2}.
In 2000 E.B.~Davies~\cite{Davies3} studied NSA anharmonic oscillators
\begin{equation}\label{L alpha}
    \mathcal{L}_{\alpha} = -\frac{d^2}{d x^2}+c |x|^{\alpha},
\end{equation}
defined on $L_2\left(\mathbb{R}\right)$ as the closure of the associated quadratic form defined on $C_{0}^{\infty}\left(\mathbb{R}\right)$, where $\alpha>0,$ $c\in\mathbb{C}\setminus \mathbb{R},$ $|\arg c|<C(\alpha)$.
He proved that for all $\alpha>0$ the spectrum of $\mathcal{L}_{\alpha}$ consists of discrete simple eigenvalues and, if we denote them in nondecreasing modulus order by $\lambda_n,$
$|\lambda_n|\to \infty$, and consider corresponding one-dimensional spectral projections $P_n$, then
the norms $\left\|P_n\right\|$ grow more rapidly than any polynomial of $n$ as $n\to\infty$, see~\cite{Davies2}, \cite{Davies3}.
Davies called operators with such spectral behavior by spectrally wild ones.
A family of eigenvectors of spectrally wild operator, although can be complete and minimal in a space, cannot constitute a Schauder basis.
E.g., the eigenvectors of $\mathcal{L}_{\alpha}$, where $\Re (c)>0$, are dense in $L_2\left(\mathbb{R}\right)$ if either $\alpha\geq 1$, or $0<\alpha<1$ and $|\arg c|<\alpha\pi/2$, see~\cite{Davies3}.
We recall that a sequence $\{\phi_n\}_{n=1}^{\infty}$ of a Banach space $X$ forms a Schauder basis of $X$ provided each $x\in X$ has a unique norm-convergent expansion
$$x=\sum\limits_{n=1}^{\infty} c_n \phi_n.$$

In 2004 E.B.~Davies and A. B. J. Kuijlaars proved that spectral projections $P_n$ of the operator $\mathcal{L}_2$, where $c=e^{i\theta}$, $0<|\theta| < \pi,$
grow exponentially~\cite{Davies4}:
$$\lim\limits_{n\to\infty} \frac{1}{n}\ln\left\|P_n\right\|=2 \Re \left\{f\left(r(\theta)e^{\frac{i\theta}{4}} \right)\right\},$$
where $f(z)= \ln \left(z+  g(z)\right) - z g(z)$, $g(z)=(z^2-1)^{1/2}$, $r(\theta)= \left(2\cos \left(\theta/2\right) \right)^{-1/2}$.

These studies were continued by R.~Henry, who determined exponential growth rates of spectral projections of the so-called complex Airy operator $\mathcal{L}_1$, where $c=e^{i\theta}$, $0<|\theta| <\frac{3\pi}{4}$, and anharmonic oscillators $\mathcal{L}_{2k},$ $k\in\mathbb{N},$ where $c=e^{i\theta}$, $0<|\theta| <\frac{(k+1)\pi}{2k},$ see~\cite{Henry1}, \cite{Henry2}.
Moreover, in~\cite{Henry3} Henry studied spectral projections $P_n$ of the complex cubic oscillator
$ \mathcal{C}_{\beta}=-\frac{d^2}{d x^2}+ i x^3 +i\beta x,\: \beta\geq 0$
with domain $H^2\left(\mathbb{R}\right)\cap L_2\left(\mathbb{R};x^6 dx\right)\subset L_2\left(\mathbb{R}\right)$ and showed that for all $\beta\geq 0,$
$$\lim\limits_{n\to\infty} \frac{1}{n}\ln\left\|P_n\right\|=\frac{\pi}{\sqrt{3}}.$$

Recently,
B. Mityagin et al. considered NSA perturbations of selfadjoint Schr\"{o}dinger operators with single-well potentials and demonstrated that norms of spectral projections $P_n$ of these operators can grow at intermediate levels, from arbitrary slowly to exponentially fast~\cite{Mityagin}. In particular, natural classes of operators with projections obeying
$$\lim\limits_{n\to\infty} \frac{1}{n^{\gamma}}\ln\left\|P_n\right\|=C,$$
where $C\in (0,\infty)$ and $\gamma \in (0,1),$ were found.


On the other hand, in "good" situation, i.e.
when the operator $A$ has a Riesz basis of $A$-invariant subspaces, the system~(\ref{Cauchy problem}) can be split into countable family of subsystems
(each subsystem lives in a corresponding $A$-invariant subspace) and we can make conclusions on the behavior of~(\ref{Cauchy problem}) on the basis of the study of its subsystems, see, e.g.,~\cite{Miloslavskii}, \cite{Rabah1}, \cite{Rabah2}, \cite{Sklyar1}, \cite{Sklyar2}, \cite{Xu}, \cite{Zwart} and the references therein.
That is why Riesz bases are convenient tools of infinite-dimensional linear systems theory and  the following question is important.
\begin{que}\label{q}
Which conditions are sufficient to guarantee that
\begin{equation}\label{question}
    A\:\textit{has a Riesz basis of eigenvectors ($A$-invariant subspaces)?}
\end{equation}
\end{que}
For equivalent definitions and stability properties of Schauder bases of subspaces (Schauder decompositions) and Riesz bases of subspaces we refer to~\cite{Marchenko1}, \cite{Marchenko2},
\cite{Marchenko3} and the references therein.

A number of recent papers are devoted to Question~\ref{q} in the case when $A$ is a perturbation of selfadjoint, nonnegative operator with discrete spectrum, including perturbations of
harmonic oscillator type operators. We refer to~\cite{Mityagin}, Section 4.3, for the brief overview of the corresponding results.

In the study of (in fact, quite old) Question~\ref{q}
a breakthrough was made by G.Q.~Xu, S.P.~Yung and H.~Zwart -- the XYZ Theorem:
\begin{xyzthm}[\cite{Xu},\cite{Zwart}]
If the following three conditions hold:
\begin{enumerate}
\item The operator $A$ generates the $C_0$-group on a Hilbert space $H$;
\item The eigenvalues $\{\lambda_n\}_{n=1}^{\infty}$ of $A$ is a union of $K<\infty$ interpolation sequences $\Lambda_k$, $1\leq k\leq K.$ In other words,
$\{\lambda_n\}_{n=1}^{\infty}=\bigcup\limits_{k=1}^K \Lambda_k$, where
\begin{equation}\label{1}
 \min_{k}\inf\limits_{\lambda_{n},\lambda_{m}\in \Lambda_k:\: n\neq m} |\lambda_{n}-\lambda_{m}|>0;
\end{equation}
\item The span of the generalized eigenvectors (eigen- and rootvectors) of $A$ is dense,
\end{enumerate}

then the condition~(\ref{question}) holds.
\end{xyzthm}
More precisely~\cite{Zwart}, under the three conditions above, there exists a certain sequence of (multidimensional, if $K>1$) spectral projections $\{P_n\}_{n=1}^{\infty}$ of $A$ such that $\{P_n H\}_{n=1}^{\infty}$ forms a Riesz basis of subspaces in $H$ with
$$\sup\limits_{n\in\mathbb{N}} \dim P_n H\leq K.$$

Operators satisfying conditions 1-3 of the XYZ Theorem naturally arise from applications, e.g., in the analysis of neutral type systems~\cite{Rabah1}, \cite{Rabah2}, \cite{Sklyar1}, \cite{Sklyar2}.
Since it is usually not hard to verify conditions 1-3 for a concrete system~(\ref{Cauchy problem}), while the important Riesz basis property of eigenvectors ($A$-invariant subspaces)
is difficult to prove, the XYZ Theorem provides us a powerful machinery for the analysis of various applications.

From the other hand, it was totally unclear: What if the eigenvalues of $A$ lie in a strip, parallel to  imaginary axis, and do not satisfy the condition of separation~(\ref{1})?
In particular:
\begin{que}\label{qq}
Is it possible to construct the unbounded generator of the $C_0$-group with eigenvalues $\{\lambda_n\}_{n=1}^{\infty}\subset i\mathbb{R}$ not satisfying~(\ref{1}) and dense family of eigenvectors, which does not form a Riesz basis?
\end{que}
In~\cite{Sklyar3} the authors obtained an affirmative answer to the Question~\ref{qq} and presented  the class of infinitesimal operators with such
eigenvalues $\{\lambda_n\}_{n=1}^{\infty}$ and complete minimal family of eigenvectors, which, however, does not form even a Schauder basis.
To formulate the corresponding result we need to consider the following classes of sequences.
\begin{defin}\label{Class Sk}(\cite{Sklyar3})
Let $k\in\mathbb{N}$ and $\Delta$ stands for the difference operator. Then we define
$$\mathcal{S}_k=\Bigl\{\left\{f(n)\right\}_{n=1}^{\infty}\subset\mathbb{R}:\: \lim\limits_{n\rightarrow \infty} f(n)=+\infty; \left\{n^j \Delta^j f(n) \right\}_{n=1}^{\infty}\in \ell_{\infty}\:\: \text{for} \:\: 1\leq j\leq k\Bigr\}.$$
\end{defin}
Clearly $\left\{\ln n\right\}_{n=1}^{\infty}\in \mathcal{S}_k$ for all $k$.
\begin{thm}\label{construction}(\cite{Sklyar3})
Assume that $\{e_n\}_{n=1}^{\infty}$ is a Riesz basis of a Hilbert space $H$ and $k\in\mathbb{N}$. Then:
\begin{enumerate}
\item The space $H_k\left(\{e_n\}\right)=\left\{x=(\mathfrak{f})\sum\limits_{n=1}^{\infty}c_n e_n:\: \{c_n\}_{n=1}^{\infty}\in \ell_2(\Delta^k)\right\}$, where
$(\mathfrak{f})\sum\limits_{n=1}^{\infty}c_n e_n$ denotes a formal series and
$\ell_2(\Delta^k)=\left\{\{c_n\}_{n=1}^{\infty}:\:\Delta^k \{c_n\}_{n=1}^{\infty} \in \ell_2\right\},$
is a separable Hilbert space.
\item The sequence $\{e_n\}_{n=1}^{\infty}$ is dense and minimal in $H_k\left(\{e_n\}\right)$, but it is not uniformly minimal in $H_k\left(\{e_n\}\right)$. Hence $\{e_n\}_{n=1}^{\infty}$  does not form a Schauder basis of $H_k\left(\{e_n\}\right)$.
\item The operator $A_k:H_k\left(\{e_n\}\right) \supset D(A_k) \mapsto H_k\left(\{e_n\}\right),$ defined by
$$A_k x=A_k (\mathfrak{f})\sum\limits_{n=1}^{\infty} c_{n} e_n= (\mathfrak{f})\sum\limits_{n=1}^{\infty} i f(n) \cdot c_{n} e_n,$$
where $\left\{f(n)\right\}_{n=1}^{\infty}\in\mathcal{S}_k$, with domain
\begin{equation}\label{Domain_k}
    D(A_k)=\left\{x= (\mathfrak{f})\sum\limits_{n=1}^{\infty} c_{n} e_n \in H_k\left(\{e_n\}\right):\:  \{f(n) \cdot c_{n}\}_{n=1}^{\infty}\in \ell_2(\Delta^k)\right\},
\end{equation}
generates the $C_0$-group $\left\{e^{A_k t} \right\}_{t\in \mathbb{R}}$ on $H_k\left(\{e_n\}\right)$, which acts for every $t\in\mathbb{R}$ by the formula
$$e^{A_k t} x=e^{A_k t}(\mathfrak{f})\sum\limits_{n=1}^{\infty} c_n e_n =(\mathfrak{f})\sum\limits_{n=1}^{\infty} e^{i t f(n)} c_{n} e_n.
$$
\end{enumerate}
\end{thm}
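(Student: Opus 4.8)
\medskip
\noindent\textbf{Proof proposal.} Since this statement records results of \cite{Sklyar3}, one may simply refer to that paper; the line of argument I would follow is the following.

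\emph{Part (1).} The plan is to equip $\ell_2(\Delta^k)$ with the inner product
$$\langle\{c_n\},\{d_n\}\rangle=\sum_{j=1}^{k}c_j\overline{d_j}+\sum_{n=1}^{\infty}(\Delta^k c)_n\overline{(\Delta^k d)_n}$$
and to note that $\{c_n\}\mapsto\left((c_1,\dots,c_k),\Delta^k\{c_n\}\right)$ is then an isometry of $\ell_2(\Delta^k)$ onto $\mathbb{C}^k\oplus\ell_2$: it is well defined by the definition of $\ell_2(\Delta^k)$, isometric by the choice of the norm, and bijective because $(\Delta^k c)_n=\sum_{i=0}^{k}\binom{k}{i}(-1)^{k-i}c_{n+i}$ lets one reconstruct $c_{n+k}$ from $c_n,\dots,c_{n+k-1}$ and $(\Delta^k c)_n$, so that prescribing $(c_1,\dots,c_k)$ together with an $\ell_2$ sequence yields a unique element of $\ell_2(\Delta^k)$. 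As $\mathbb{C}^k\oplus\ell_2$ is a separable Hilbert space and $x=(\mathfrak{f})\sum_n c_n e_n\mapsto\{c_n\}$ is a linear bijection of $H_k(\{e_n\})$ onto $\ell_2(\Delta^k)$ carrying the norm, (1) follows.

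\emph{Part (2).} In the model $H_k(\{e_n\})\cong\ell_2(\Delta^k)$ the vector $e_n$ corresponds to the finitely supported sequence $\delta_{n\cdot}$. For \emph{minimality} one checks that the coordinate functional $\{c_n\}\mapsto c_m$ is bounded on $\ell_2(\Delta^k)$ --- trivially for $m\le k$, and for $m>k$ because iterating the recurrence above expresses $c_m$ through $c_1,\dots,c_k$ and $(\Delta^k c)_1,\dots,(\Delta^k c)_{m-k}$, the weight in front of $(\Delta^k c)_j$ being the discrete Green kernel $\binom{m-j-1}{k-1}$; these functionals are biorthogonal to $\{e_n\}$. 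For \emph{density} I would pass to the dual $H_k(\{e_n\})^{*}\cong\mathbb{C}^k\oplus\ell_2$: a functional $(\alpha,\beta)$ annihilating every $e_m$ must satisfy $(\nabla^k\beta)_m=0$ for all $m>k$ (with $\nabla$ the backward difference and $\beta_j=0$ for $j\le0$) and $\alpha_m=-(\nabla^k\beta)_m$ for $m\le k$; the first condition forces $\beta$ to agree with a polynomial sequence of degree $\le k-1$, so $\beta\in\ell_2$ gives $\beta=0$ and then $\alpha=0$. For \emph{failure of uniform minimality}, $\|e_n\|_{H_k}=\left(\sum_{i=0}^{k}\binom{k}{i}^2\right)^{1/2}$ is bounded for $n>k$, whereas taking $c_1=\dots=c_k=0$ and $(\Delta^k c)_j$ proportional to $\binom{n-j-1}{k-1}$ shows $\|e_n^{*}\|\ge\left(\sum_{j=1}^{n-k}\binom{n-j-1}{k-1}^2\right)^{1/2}\asymp n^{k-1/2}\to\infty$; since $\{e_n^{*}\}$ is the unique system biorthogonal to the complete minimal family $\{e_n\}$, the products $\|e_n\|\,\|e_n^{*}\|$ are unbounded, which rules out uniform minimality, and hence also the Schauder basis property.

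\emph{Part (3).} The crux --- and the step I expect to be the main obstacle --- is to show that $T(t):\{c_n\}\mapsto\{e^{itf(n)}c_n\}$ maps $\ell_2(\Delta^k)$ boundedly into itself with $\|T(t)\|$ bounded on bounded $t$-sets (in fact polynomially in $|t|$). I would expand $\Delta^k\{e^{itf(n)}c_n\}$ by the discrete Leibniz rule as $\sum_{i=0}^{k}\binom{k}{i}(\Delta^i e^{itf(\cdot)})(n+k-i)\,(\Delta^{k-i}c)(n)$. The term $i=0$ is dominated pointwise by $|(\Delta^k c)(n)|\in\ell_2$. For $i\ge1$ the hypothesis $\{f(n)\}\in\mathcal{S}_k$, i.e. $\{n^{j}\Delta^{j}f(n)\}\in\ell_{\infty}$ for $1\le j\le k$, together with the finite-difference chain rule, gives $|(\Delta^{i}e^{itf(\cdot)})(m)|\le C_i\,p_i(|t|)\,m^{-i}$ with $p_i$ a polynomial without constant term; while, using the reconstruction of $\{c_n\}$ from $(c_1,\dots,c_k)$ and $\Delta^k c$ together with the discrete Hardy inequality $\sum_n\left(n^{-1}\sum_{j\le n}|a_j|\right)^2\le 4\sum_n|a_n|^2$ applied to $a=\Delta^k c$, one obtains $\{(\Delta^{k-i}c)(n)\,n^{-i}\}\in\ell_2$ for every $0\le i\le k$. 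Multiplying the two estimates bounds each Leibniz term in $\ell_2$ by $C\,p(|t|)\,\|\{c_n\}\|_{\ell_2(\Delta^k)}$; this, with $|e^{itf(j)}c_j|=|c_j|$ for $j\le k$, gives the asserted bound on $\|T(t)\|$. The group law $T(t+s)=T(t)T(s)$, $T(0)=I$ is immediate, and strong continuity follows from dominated convergence, made uniform on bounded sets by the same estimates together with the density proved in (2), so $\{T(t)\}_{t\in\mathbb{R}}$ is a $C_0$-group. Finally, its generator $G$ is closed and densely defined; since the coordinate functionals are bounded by (1), for $c\in D(G)$ the $n$-th coordinate of $Gc$ equals $\lim_{t\to0}t^{-1}(e^{itf(n)}-1)c_n=if(n)c_n$, so $G\subseteq A_k$; and because $\lambda I-A_k$ is injective whenever $\Re\lambda\ne0$ while $\lambda I-G$ is bijective for $\Re\lambda$ large, one concludes $G=A_k$ with domain \eqref{Domain_k}, which is (3).
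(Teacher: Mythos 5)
The paper gives no proof of Theorem~\ref{construction} at all --- it is imported verbatim from~\cite{Sklyar3} --- so there is nothing internal to compare against; your proposal is correct and follows exactly the method of that source, namely difference calculus plus the discrete Hardy inequality~(\ref{Hardy}), which is also the engine of the present paper's own proof of Theorem~\ref{Expl} (your telescoping/Leibniz treatment of $\Delta^k\{e^{itf(n)}c_n\}$ mirrors the paper's splitting of $\frac{c_n}{\lambda_n-\lambda}-\frac{c_{n-1}}{\lambda_{n-1}-\lambda}$ there). The only caveat is notational: the paper's $\Delta$ is the \emph{backward} difference with $c_0=\dots=c_{1-k}=0$ (an invertible lower-triangular matrix), so $\ell_2(\Delta^k)$ is isometric to $\ell_2$ outright and your $\mathbb{C}^k\oplus\ell_2$ splitting for the forward difference, while yielding an equivalent norm on the same set, is not needed; likewise the Green kernel is $\binom{m-j+k-1}{k-1}$ rather than $\binom{m-j-1}{k-1}$, which does not affect the asymptotics $\|e_n^{*}\|\asymp n^{k-1/2}$ you use.
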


Similar results hold for the case of operators with the same spectral behaviour on certain Banach spaces $\ell_{p,k}\left(\{e_n\}\right),$ $p>1,$ $k\in\mathbb{N}$,
see~\cite{Sklyar3}. Note that if we take, for example, $f(n)=\sqrt{n},$ $n\in\mathbb{N},$ and define the operator $A_1$ on $H_1\left(\{e_n\}\right)$ as in Theorem~\ref{construction}, then $A_1$ will not generate
a $C_0$-semigroup on $H_1\left(\{e_n\}\right)$, see~\cite{Sklyar3}, Proposition~7, Proposition~8.

The main objective of the paper is to obtain an explicit form of the resolvent for the class of generators $A_k$ of $C_0$-groups from Theorem~\ref{construction}
and to characterize the asymptotic properties of the resolvent on a complex plane.
Theorem~\ref{construction} together with the XYZ~Theorem show that Theorem~1.1 from~\cite{Zwart} dealing with the case of simple eigenvalues
$\{\lambda_n\}_{n=1}^{\infty}$ in the XYZ~Theorem, satisfying
$$\inf\limits_{n\neq m} |\lambda_{n}-\lambda_{m}|>0,$$
is sharp, see also Example~1.3 in~\cite{Zwart}. In the present paper we will demonstrate that Theorem~\ref{construction} means that the XYZ Theorem is also sharp, see Section 2.

\begin{rem}
Operators with simple eigenvalues $\{\lambda_n\}_{n=1}^{\infty}$ not satisfying the condition~2 of the XYZ Theorem and non-basis family of eigenvectors are considered in recent applications.
In~\cite{Almog} the author study the stability of the normal state of superconductors in the presence of electric currents in the large domain limit
using the time-dependent Ginzburg-Landau model. The study involves spectral analysis of the operator
$\mathcal{L}: D(\mathcal{L}) \mapsto L_2 \left(\mathbb{R},\mathbb{C}\right)$, defined by
$$\mathcal{L}  = - \frac{d^2}{d x^2} +i x ,$$
where
$D(\mathcal{L})=\left\{\psi\in  L_2 \left(\mathbb{R_{+}},\mathbb{C}\right):\: x\psi \in L_2 \left(\mathbb{R_{+}},\mathbb{C}\right), \psi\in H_{0}^{2} \left(\mathbb{R_{+}},\mathbb{C}\right)\right\}.$
Let $\{\mu_n\}_{n=1}^{\infty} \subset \mathbb{R}$ denotes the non-increasing sequence of zeroes of $Ai(z)$, Airy function. Then $\{\lambda_n\}_{n=1}^{\infty}$, where
$\lambda_n=e^{-\frac{2\pi}{3} i} \mu_n,\:n\in\mathbb{N},$
is a sequence of eigenvalues of $\mathcal{L}$~\cite{Almog}. Since $\lim\limits_{n\rightarrow \infty} \mu_n=-\infty$ and $\lim\limits_{n\rightarrow \infty} |\mu_{n+1}-\mu_n|=0$ (see~\cite{Vallee}), the eigenvalues $\{\lambda_n\}_{n=1}^{\infty}$ of $\mathcal{L}$ obey the condition $$\lim\limits_{n\rightarrow \infty} |\lambda_{n+1}-\lambda_n|=0$$
and, hence, the set $\{\lambda_n\}_{n=1}^{\infty}$ cannot be decomposed into a finite number of sets $\Lambda_k$ satisfying~(\ref{1}).

The eigenfunctions of $\mathcal{L}$ are
$$\tilde{\psi}_n=Ai\left(e^{\frac{\pi i}{6}} x +\mu_n\right)\in H_{0}^{2} \left(\mathbb{R_{+}},\mathbb{C}\right),\: n\in\mathbb{N}.$$
Normalized eigenfunctions $\psi_n=\frac{\tilde{\psi}_n}{\left\| \tilde{\psi}_n\right\|},$ $n\in\mathbb{N},$ are dense in $L_2\left(\mathbb{R},\mathbb{C}\right)$, as it is proved in~\cite{Almog},
but do not form a Schauder basis of $L_2 \left(\mathbb{R},\mathbb{C}\right)$, since $\mathcal{L}$ is spectrally wild~\cite{Davies3}.
\end{rem}

\section{The sharpness of the XYZ Theorem}

We will use the notation from~\cite{Sklyar3}, see also Theorem~\ref{construction}.
By Proposition~3 of~\cite{Sklyar3} we have that for any $k\in\mathbb{N}$ the sequence $\{e_n\}_{n=1}^{\infty}$ is dense and minimal in $H_k\left(\{e_n\}\right)$, but it is not uniformly minimal in $H_k\left(\{e_n\}\right)$. It means that for each $n\in\mathbb{N}$
$$\varrho\left(e_n, \overline{Lin}\{e_j\}_{j\neq n} \right)>0,$$
but
$$\inf\limits_{n\in\mathbb{N}}\varrho\left(e_n, \overline{Lin}\{e_j\}_{j\neq n} \right)=0,$$
where $\varrho\left(x,Y\right)$ denotes a standard distance from the point $x$ to a set $Y$, defined by
$$\varrho\left(x,Y\right)=\inf\limits_{y\in Y} \|x-y\|.$$

Let $\{\phi_n\}_{n=1}^{\infty}$ be dense and minimal sequence in a Hilbert space $H$, but is not uniformly minimal in $H$.
Then it can happen that there exists a splitting of $\{\phi_n\}_{n=1}^{\infty}$
into infinite number of disjoint groups
with at most $K<\infty$ elements in each of them, i.e. $$\{\phi_n\}_{n=1}^{\infty}=\left\{\{\phi_j\}_{j\in A_n}\right\}_{n=1}^{\infty},$$
where
\begin{equation}\label{disj sets}
   \mathbb{N}=\bigcup\limits_{n=1}^{\infty} A_n,\quad A_n\cap A_m=\emptyset\:\:\text{if}\:\: n\neq m,\quad \left|A_n\right|\leq K\:\:\text{for all}\:\: n,
\end{equation}
such that the corresponding sequence of subspaces $\{Lin \{\phi_n\}_{n\in A_n}\}_{n=1}^{\infty}$ constitute a Riesz basis of subspaces of $H$ with uniform bound of dimensions of all subspaces not exceeding $K$. See e.g. Example~1.3 in~\cite{Zwart} for details.
In order to show that the XYZ~Theorem is sharp we will prove that this situation is impossible for our construction from Theorem~\ref{construction}. More precisely, thereby we demonstrate that if the eigenvalues of the generator of the $C_0$-group in a Hilbert space do not satisfy~(\ref{1}), then the conclusion of the XYZ~Theorem can be false.
Furthermore, we will prove a little more.

\begin{thm}\label{sharp}
Let $k\in\mathbb{N}$ and $\{e_n\}_{n=1}^{\infty}\subset H_k\left(\{e_n\}\right)$ be a sequence from Theorem~\ref{construction}. Suppose that $\{A_n\}_{n=1}^{\infty}$ is an arbitrary decomposition of $\mathbb{N}$ into disjoint sets, i.e.
$$\mathbb{N}=\bigcup\limits_{n=1}^{\infty} A_n,\quad A_n\cap A_m=\emptyset, n\neq m.$$
Then $\left\{\overline{Lin} \{e_j\}_{j\in A_n}\right\}_{n=1}^{\infty}$ does not form a Schauder decomposition of $H_k\left(\{e_n\}\right)$.
\end{thm}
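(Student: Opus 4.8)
The plan is to argue by contradiction, assuming that some decomposition $\{A_n\}_{n=1}^\infty$ of $\mathbb N$ into disjoint sets yields a sequence of subspaces $G_n := \overline{Lin}\{e_j\}_{j\in A_n}$ which forms a Schauder decomposition of $H_k(\{e_n\})$. The first observation is that a Schauder decomposition comes with uniformly bounded projections: there is a constant $C$ such that the partial-sum projections $Q_N := \sum_{n=1}^N P_{G_n}$ (where $P_{G_n}$ is the projection onto $G_n$ along the complementary closed span) satisfy $\|Q_N\| \le C$ for all $N$. Since $\{e_n\}$ is minimal in $H_k(\{e_n\})$ with biorthogonal system, say $\{e_n^*\}$, each finite-dimensional $G_n$ is spanned by the $e_j$, $j\in A_n$, and the restriction of $Q_N$ to the finite set $\bigcup_{n\le N} A_n$ is just the coordinate projection onto those basis vectors. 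So the contradiction will be extracted from a failure of uniform boundedness of these coordinate projections — which is exactly the manifestation of $\{e_n\}$ failing to be uniformly minimal, but one must be careful because grouping can in principle tame the projections (this is precisely the loophole exploited in Example~1.3 of~\cite{Zwart}).

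The key is therefore a quantitative analysis of the norms in $H_k(\{e_n\})$. Recall that $\|x\|_{H_k}^2 \asymp \|\Delta^k\{c_n\}\|_{\ell_2}^2$ (plus finitely many lower-order correction terms pinning down the "constants of integration"), where $x = (\mathfrak f)\sum c_n e_n$. The plan is to test the hypothetical projections on cleverly chosen finitely supported vectors. Given any index $m$, I would build a vector $x$ supported on a long block of consecutive integers around $m$ whose coefficients $c_n$ behave like a discrete polynomial of degree $k-1$ on that block (so that $\Delta^k c_n$ is supported only near the two endpoints of the block, making $\|x\|_{H_k}$ small relative to the size of the individual coefficients), and then show that any coordinate projection that "cuts" such a block in the interior must blow up: removing the tail of a degree-$(k-1)$ polynomial destroys the cancellation in $\Delta^k$ and forces $\|Q_N x\|_{H_k}$ to grow without bound as the block length grows. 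This is essentially the discrete Hardy inequality mechanism advertised in the abstract, used in the reverse direction — Hardy-type estimates control how much the $\Delta^k$-norm can shrink, and here I exploit that a projection respecting the grouping cannot avoid cutting arbitrarily long polynomial blocks in their interior.

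The combinatorial heart of the argument is: since each $|A_n|$ need not be bounded, I cannot immediately say a block is cut, so I must show that \emph{no} partition of $\mathbb N$ into (even infinite, even unbounded) pieces can arrange that every sufficiently long arithmetic-progression-like block of consecutive integers lies within a single piece together with all the earlier integers needed to form a partial sum $Q_N$. Concretely: fix a large $L$; among the blocks $[jL+1, (j+1)L]$, $j = 0,1,2,\dots$, infinitely many must be "split" by the partition in the sense that they contain indices belonging to $A_n$'s with both small and large $n$; choosing $L \to \infty$ and picking the corresponding split blocks produces a sequence of test vectors on which $\sup_N \|Q_N\|$ is infinite, contradicting the Schauder decomposition property. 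The main obstacle I anticipate is precisely this step — ruling out clever infinite regroupings — and making the "split block forces blow-up" estimate uniform and quantitative; this is where the precise structure of $\ell_2(\Delta^k)$, the explicit description of $H_k(\{e_n\})$ from Theorem~\ref{construction}, and a careful discrete Hardy inequality must be combined. Once the norm estimate and the combinatorial selection are in place, the contradiction with $\|Q_N\|\le C$ is immediate, proving that $\{G_n\}$ cannot be a Schauder decomposition.
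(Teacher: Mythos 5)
Your proposal is an outline rather than a proof: the two steps you yourself flag as the ``main obstacle'' --- the combinatorial selection of split blocks for an arbitrary partition and the uniform quantitative blow-up estimate --- are exactly the hard parts, and neither is carried out. Moreover, the mechanism as described has concrete defects. First, the sets $A_n$ in the theorem may be infinite, so the subspaces $\overline{Lin}\{e_j\}_{j\in A_n}$ need not be finite-dimensional and $\bigcup_{n\le N}A_n$ need not be finite; your description of $Q_N$ as a coordinate projection restricted to a finite index set does not apply. Second, the claim that cutting a degree-$(k-1)$ polynomial block in its interior forces blow-up is already false for $k=1$: there the block carries constant coefficients, $\|x\|_1$ is comparable to $\|\Delta c\|_{\ell_2}$, and replacing the indicator of $[a,b]$ by that of $[a,m]$ merely relocates one unit jump, so the ratio does not grow with the block length at all. (One can salvage a blow-up for initial-segment cuts using degree-$k$ ramps, but that still leaves unresolved the combinatorial step over arbitrary, possibly infinite and interleaved, cells $A_n$.)

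The paper's proof shows that none of this machinery --- and no Hardy inequality --- is needed for this particular theorem. One tests the single element $x^{\ast}=(\mathfrak{f})\sum_{j=1}^{\infty}e_j\in H_k\left(\{e_n\}\right)$, whose coefficient sequence is constant. If $\left\{\mathfrak{M}_n\right\}_{n=1}^{\infty}$ were a Schauder decomposition with coordinate projections $P_n$, then $P_nx^{\ast}=\sum_{j\in A_n}e_j$, and the $k$-th difference of the indicator sequence of $A_n$ equals $1$ at the smallest element of $A_n$; the lower Riesz-basis inequality in $H$ then gives $\left\|P_nx^{\ast}\right\|_k^2\ge 1/M$ uniformly in $n$. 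Hence the terms of the series $\sum_{n}P_nx^{\ast}$ do not tend to zero, the series cannot converge to $x^{\ast}$, and the Schauder decomposition property fails. This argument is completely insensitive to how the $A_n$ are arranged, which is precisely the point where your block-splitting scheme gets stuck; I recommend you abandon the uniform-boundedness route in favour of this direct divergence argument.
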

\begin{proof}
Fix $k\in\mathbb{N}$ and assume the opposite, i.e. let there exists a decomposition of $\mathbb{N}$ into disjoint sets, $\mathbb{N}=\bigcup\limits_{n=1}^{\infty} A_n,$ $A_n\cap A_m=\emptyset$ if $n\neq m$, such that $\left\{\mathfrak{M}_n=\overline{Lin} \{e_j\}_{j\in A_n}\right\}_{n=1}^{\infty}$ constitutes a Schauder decomposition of $H_k\left(\{e_n\}\right)$.
Then, by the definition of the Schauder decomposition, every $x\in H_k\left(\{e_n\}\right)$ can be uniquely represented in a series
$$x=\sum\limits_{n=1}^{\infty} x_n,$$
where $x_n\in \mathfrak{M}_n$ for each $n\in\mathbb{N}$,
and there exists an associated sequence of coordinate linear projections $\{P_n\}_{n=1}^{\infty}$ defined by $P_n x=P_n \sum\limits_{m=1}^{\infty} x_m=x_n,$ where
$x_n\in \mathfrak{M}_n,$ $n\in\mathbb{N}$.
It follows that for every $n,j\in\mathbb{N}$
\begin{equation}\label{proj}
    P_n e_j =\left \{
\begin{array}{l}
e_j,\quad j\in A_n,\\
0,\quad \:j\notin A_n.\\
\end{array}
\right .
\end{equation}

Consider an element $x^{\ast}=(\mathfrak{f})\sum\limits_{j=1}^{\infty}e_j\in H_k\left(\{e_n\}\right).$ Then, taking into account~(\ref{proj}), we have that for every $n\in\mathbb{N}$
\begin{equation}\label{pn_proj}
   P_n x^{\ast}= P_n \left((\mathfrak{f})\sum\limits_{j=1}^{\infty}e_j\right)= \sum\limits_{j\in A_n} e_j.
\end{equation}

We recall that the norm in a Hilbert space $H_k\left(\{e_n\}\right)$ is defined by
\begin{align*}
\|x\|_k &=\left\|(\mathfrak{f})\sum\limits_{n=1}^{\infty}c_n e_n\right\|_k\\
&=\left\|\sum\limits_{n=1}^{\infty}\left(c_{n} -C_{k}^{1} c_{n-1}+\dots+  (-1)^{k+1} C_{k}^{k-1} c_{n-k+1} +(-1)^{k} c_{n-k}\right) e_n\right\|,
\end{align*}
where $x=(\mathfrak{f})\sum\limits_{n=1}^{\infty}c_n e_n\in H_k\left(\{e_n\}\right),$ $C_k^m$ are binomial coefficients, $\|\cdot \|$ denotes the norm in an initial Hilbert space $H$ and $c_{1-j}=0$ for all $j\in\mathbb{N}$, see~\cite{Sklyar3}.
Since $\{e_n\}_{n=1}^{\infty}$ is a Riesz basis of $H$ (see Theorem~\ref{construction}), there exist two constants $M\geq m>0$ such that for every
$y=\sum\limits_{n=1}^{\infty} \alpha_n e_n\in H$ we have
\begin{equation}\label{1Riesz}
  m \|y\|^2 \leq \sum\limits_{n=1}^{\infty} |\alpha_n|^2 \leq M \|y\|^2.
\end{equation}

By virtue of~(\ref{pn_proj}) and~(\ref{1Riesz}) we obtain that for every $n\in\mathbb{N}$
$$\left\|P_n x^{\ast} \right\|_k^2=\left\|\sum\limits_{j\in A_n} e_j \right\|_k^2=\left\|(\mathfrak{f})\sum\limits_{j=1}^{\infty}\xi_j(n) e_j \right\|_k^2\geq \frac{1}{M},$$
where for every $n,j\in\mathbb{N}$
$$\xi_j(n)=\left \{
\begin{array}{l}
1,\quad j\in A_n,\\
0,\quad j\notin A_n.\\
\end{array}
\right .$$

Thus $\left\|P_n x^{\ast} \right\|_k \nrightarrow 0$ as $n\to\infty$, which means that $x^{\ast}$ can not be represented in a convergent series
$$\sum\limits_{n=1}^{\infty} x_n^{\ast}=\sum\limits_{n=1}^{\infty} P_n x^{\ast},$$
where $x_n^{\ast}\in \mathfrak{M}_n$ for each $n\in\mathbb{N}$. So we arrived at a contradiction with the definition of the Schauder decomposition.
\end{proof}

Theorem~\ref{sharp} leads to the following.
\begin{cor}\label{cor sharp}
The XYZ~Theorem is sharp. None of its conditions can be weakened.
\end{cor}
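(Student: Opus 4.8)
The plan is to treat the three hypotheses of the XYZ~Theorem separately and, for each one, to produce an operator that satisfies the remaining two but for which the conclusion~(\ref{question}) fails once that hypothesis is dropped or weakened; only the second hypothesis requires a genuinely new argument, and that is precisely where Theorem~\ref{sharp} enters.

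First I would dispose of Hypothesis~2 by specialising Theorem~\ref{construction}: fix $k\in\mathbb{N}$, take $f(n)=\ln n$ (which lies in every $\mathcal{S}_k$, as observed after Definition~\ref{Class Sk}), and let $A_k$ act on $H_k(\{e_n\})$. By Theorem~\ref{construction}(3) the operator $A_k$ generates a $C_0$-group, so Hypothesis~1 holds; by Theorem~\ref{construction}(2) its eigenvectors $\{e_n\}$ are dense in $H_k(\{e_n\})$, so Hypothesis~3 holds; and its eigenvalues $\lambda_n=i\ln n$ are simple and lie on $i\mathbb{R}$. These eigenvalues are not a finite union of uniformly separated sequences, since a $\delta$-separated subset of $\{i\ln n:\,1\le n\le N\}$ has at most $\delta^{-1}\ln N+1$ elements, so $K$ such subsets cover only $O(\ln N)$ of the first $N$ integers; hence for every finite splitting of $\mathbb{N}$ some $\Lambda_j$ violates~(\ref{1}), i.e.\ Hypothesis~2 fails. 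Now, because the $\lambda_n$ are simple and the eigenvectors are exactly $\{e_n\}$, every Riesz projection of $A_k$ over a finite spectral set is a coordinate partial-sum operator, so the spectral subspaces that the XYZ~conclusion would provide are necessarily of the form $\overline{Lin}\{e_j\}_{j\in A_n}$ for some partition $\mathbb{N}=\bigcup_n A_n$ with $\sup_n|A_n|\le K$; by Theorem~\ref{sharp} no such family is even a Schauder decomposition of $H_k(\{e_n\})$, let alone a Riesz basis of subspaces, so~(\ref{question}) fails. Thus Hypothesis~2 cannot be relaxed -- neither to $K=\infty$ nor by dropping~(\ref{1}).

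For Hypothesis~1 I would recall the NSA harmonic oscillator $\mathcal{L}_2$ with $c=e^{i\theta}$, $0<|\theta|<\pi$, from the Introduction: being sectorial, $-\mathcal{L}_2$ generates an analytic $C_0$-semigroup, which is not a $C_0$-group because the operator is unbounded; the spectrum of $\mathcal{L}_2$ consists of simple eigenvalues equally spaced along a ray, hence uniformly separated, so Hypothesis~2 holds with $K=1$; and the eigenvectors of $\mathcal{L}_2$ are complete in $L_2(\mathbb{R})$, so Hypothesis~3 holds. However, by~\cite{Davies4} the one-dimensional spectral projections of $\mathcal{L}_2$ grow exponentially, so no bounded-dimensional regrouping of them is uniformly bounded and $\mathcal{L}_2$ has no Riesz basis of invariant subspaces of bounded dimension; hence the $C_0$-group hypothesis cannot be weakened to a $C_0$-semigroup hypothesis. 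The necessity of Hypothesis~3 is then immediate: a Riesz basis of subspaces of $H$ is complete in $H$, so if the closed linear span of the generalised eigenvectors of $A$ is a proper subspace, no family of $A$-invariant subspaces built from them can be complete, and~(\ref{question}) fails outright.

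The step I expect to be the main obstacle is the reduction in the second paragraph: one must verify that, for our operator $A_k$, the spectral subspaces appearing in the XYZ~conclusion can only be the coordinate subspaces $\overline{Lin}\{e_j\}_{j\in A_n}$ -- so that Theorem~\ref{sharp} genuinely obstructs~(\ref{question}) -- which rests on simplicity of the $\lambda_n$ together with identifying the Riesz projections of $A_k$ over finite spectral sets with partial-sum operators, a consequence of the diagonal action of $A_k$ on $\{e_n\}$. A secondary point requiring care, in the treatment of Hypothesis~1, is ruling out bounded-dimensional regroupings of the exponentially growing one-dimensional projections into a Riesz basis of subspaces; everything else is a one-line consequence of the relevant definitions or is already recorded in the Introduction.
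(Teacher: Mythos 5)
Your treatment of conditions 2 and 3 coincides in substance with the paper's own proof: condition 3 is immediate, and condition 2 is handled by feeding the operators of Theorem~\ref{construction} into Theorem~\ref{sharp}, exactly as the paper does. Your two supplementary observations there --- that $\{i\ln n\}_{n=1}^{\infty}$ cannot be split into finitely many sequences satisfying~(\ref{1}) (a $\delta$-separated subset of $\{\ln n:\ n\le N\}$ has only $O(\ln N)$ elements), and that simplicity of the eigenvalues forces every finite-dimensional $A_k$-invariant subspace to be a coordinate subspace $Lin\{e_j\}_{j\in A_n}$, so that Theorem~\ref{sharp} really does obstruct the XYZ conclusion --- are correct and worth spelling out.

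The condition-1 part, however, contains a genuine gap, at exactly the point you label as ``secondary''. The inference ``the rank-one spectral projections $P_n$ of $\mathcal{L}_2$ grow exponentially, hence no regrouping into uniformly bounded blocks of dimension at most $K$ can form a Riesz basis of subspaces'' is false for $K\ge 2$. Take $H=\bigoplus_{m=1}^{\infty}\mathbb{C}^2$ (orthogonal sum) and, in the $m$-th summand, two unit vectors $\psi_{2m-1},\psi_{2m}$ at angle $\varepsilon_m\to 0$: the two-dimensional blocks form an orthogonal, hence Riesz, decomposition into invariant subspaces of any operator diagonal with respect to $\{\psi_n\}$, while the rank-one projections onto $\psi_{2m}$ along $\psi_{2m-1}$ have norms of order $1/\varepsilon_m$, which may grow as fast as one pleases. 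So exponential growth of $\|P_n\|$ rules out only the case $K=1$, and your argument does not show that $\mathcal{L}_2$ violates the conclusion~(\ref{question}); that stronger statement is not established by~\cite{Davies4}. The paper avoids this issue entirely by using a different, elementary counterexample for condition 1: the diagonal operator $A\phi_n=-n\phi_n$ built on a bounded conditional (non-Riesz) Schauder basis $\{\phi_n\}_{n=1}^{\infty}$ of $H$, which generates a $C_0$-semigroup but not a group and has separated simple eigenvalues and dense eigenvectors. There the exclusion of all bounded-cardinality regroupings does close: since $\inf_n\|\phi_n\|>0$ and $\sup_n\|\phi_n\|<\infty$, the coordinate functionals are uniformly bounded, so if some grouping with $|A_n|\le K$ were a Riesz basis of subspaces then $\{\phi_n\}_{n=1}^{\infty}$ would be a bounded unconditional, hence Riesz, basis of $H$, contradicting conditionality. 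You should replace the $\mathcal{L}_2$ argument by a counterexample of this kind, or else supply a genuinely new argument excluding every bounded-dimension regrouping for $\mathcal{L}_2$.
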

\begin{proof}
Indeed, condition $3$ of the XYZ~Theorem obviously can not be weakened. If one weakens condition $2$ but conditions $1$ and $3$ are fulfilled, then, by virtue of Theorem~\ref{sharp}, the class of counterexamples are given by Theorem~\ref{construction}.

Let us weaken condition $1$. Suppose that conditions $2$ and $3$ are satisfied, operator $A$ does not generate the $C_0$-group on $H$ but $A$ generates the $C_0$-semigroup on $H$. Then the counterexample is given as follows.

Let $\{\phi_n\}_{n=1}^{\infty}$ be a bounded non-Riesz basis of $H$, i.e. bounded conditional basis. It means that $\{\phi_n\}_{n=1}^{\infty}$ constitutes a Schauder basis of $H$, but does not form a Riesz basis of $H$, and we have
$$0<\inf\limits_{n}\|\phi_n\|,\quad \sup\limits_{n}\|\phi_n\|<\infty.$$
Since $\{\phi_n\}_{n=1}^{\infty}$ is a Schauder basis of $H$, every $x\in H$ has a unique norm-convergent expansion
$$x=\sum\limits_{n=1}^{\infty} c_n \phi_n.$$
Then
we define the operator $A:H\supset D(A)\mapsto H$ as follows,
$$Ax=A \sum\limits_{n=1}^{\infty} c_n \phi_n = -\sum\limits_{n=1}^{\infty} n c_n \phi_n,$$
where
$$D(A)=\left\{x=\sum\limits_{n=1}^{\infty} c_n \phi_n \in H:\: \sum\limits_{n=1}^{\infty} n c_n \phi_n \in H\right\}.$$

It can be easily shown that $A$ generates the $C_0$-semigroup on $H$, the spectrum of $A$ is pure point and consists of simple eigenvalues $-n$, $n\in\mathbb{N}$, with corresponding eigenvectors
$\{\phi_n\}_{n=1}^{\infty}$, see, e.g.,~\cite{Haase}. Finally, it is not hard to prove that, if $\{A_n\}_{n=1}^{\infty}$ is a decomposition of $\mathbb{N}$ into disjoint sets with at most $K$ elements in each of them,
such that~(\ref{disj sets}) holds, then $$\left\{\overline{Lin} \{\phi_j\}_{j\in A_n}\right\}_{n=1}^{\infty}$$ does not form a Riesz basis of subspaces of $H$.
\end{proof}

For our construction of generators of $C_0$-groups with complete minimal non-basis family of eigenvectors in special classes of Banach spaces $\ell_{p,k}\left(\{e_n\}\right),$ $p>1,$ $k\in\mathbb{N}$ (see Theorem~16 in~\cite{Sklyar3}), we have a result similar to the Theorem~\ref{sharp}.
Here $\{e_n\}_{n=1}^{\infty}$ denotes an arbitrary symmetric basis of an initial Banach space $\ell_p$, $p\geq 1.$
Recall that Schauder basis $\{e_n\}_{n=1}^{\infty}$ is called symmetric provided any its permutation $\{e_{\theta(n)}\}_{n=1}^{\infty}$, $\theta(n): \mathbb{N}\mapsto \mathbb{N}$, also forms a Schauder basis, equivalent to $\{e_n\}_{n=1}^{\infty}$.
For any $p\geq 1$ and $k\in\mathbb{N}$ the space $$\ell_{p,k}\left(\{e_n\}\right)=\left\{x=(\mathfrak{f})\sum\limits_{n=1}^{\infty}c_n e_n:\: \{c_n\}_{n=1}^{\infty}\in \ell_p(\Delta^k)\right\},$$ where
$(\mathfrak{f})\sum\limits_{n=1}^{\infty}c_n e_n$ also denotes a formal series and
$$\ell_p(\Delta^k)=\left\{\{c_n\}_{n=1}^{\infty}:\:\Delta^k \{c_n\}_{n=1}^{\infty} \in \ell_p\right\},$$
is a separable Banach space, isomorphic to $\ell_p$, see~\cite{Sklyar3}.
If $p>1$, then the sequence $\{e_n\}_{n=1}^{\infty}$ is dense and minimal in $\ell_{p,k}\left(\{e_n\}\right)$,  $p>1,$ $k\in\mathbb{N}$, but it is not uniformly minimal in $\ell_{p,k}\left(\{e_n\}\right)$, so  $\{e_n\}_{n=1}^{\infty}$ does not form a Schauder basis of $\ell_{p,k}\left(\{e_n\}\right)$. Using similar arguments we obtain the following result, analogous to Theorem~\ref{sharp}.
\begin{thm}\label{sharp 2}
Let $k\in\mathbb{N}$ and $\{e_n\}_{n=1}^{\infty}\subset \ell_{p,k}\left(\{e_n\}\right)$, $p\geq 1,$ be a sequence defined above. Suppose that $\{A_n\}_{n=1}^{\infty}$ is an arbitrary decomposition of $\mathbb{N}$ into disjoint sets.
Then $\left\{\overline{Lin} \{e_j\}_{j\in A_n}\right\}_{n=1}^{\infty}$ does not form a Schauder decomposition of $\ell_{p,k}\left(\{e_n\}\right)$.
\end{thm}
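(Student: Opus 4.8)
The plan is to reproduce the proof of Theorem~\ref{sharp} almost word for word, the only structural change being that, $\{e_n\}$ now being merely a symmetric basis of $\ell_p$, the two-sided Riesz estimate~(\ref{1Riesz}) is no longer at our disposal and must be replaced by two standard facts about symmetric bases: every symmetric basis is unconditional, with some unconditional constant $C<\infty$, and its vectors have uniformly positive norm, $\delta:=\inf_n\|e_n\|_{\ell_p}>0$ (one may even normalize and take $\delta=1$). Fix $k\in\mathbb{N}$ and $p\ge1$, and assume for contradiction that some partition $\mathbb{N}=\bigcup_{n=1}^{\infty}A_n$ into pairwise disjoint sets turns $\{\mathfrak{M}_n=\overline{Lin}\{e_j\}_{j\in A_n}\}_{n=1}^{\infty}$ into a Schauder decomposition of $\ell_{p,k}\left(\{e_n\}\right)$. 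Then, exactly as in the proof of Theorem~\ref{sharp}, there is an associated sequence of bounded coordinate projections $\{P_n\}_{n=1}^{\infty}$, and $P_ne_j=e_j$ for $j\in A_n$ while $P_ne_j=0$ otherwise, cf.~(\ref{proj}).

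I would then fix $x^{\ast}=(\mathfrak{f})\sum_{j=1}^{\infty}e_j$, which lies in $\ell_{p,k}\left(\{e_n\}\right)$ since $\Delta^k(1,1,1,\dots)$ is finitely supported, hence in $\ell_p$ for every $p\ge1$. The coordinate functionals $c_j$ on $\ell_{p,k}\left(\{e_n\}\right)$ are bounded — this is the minimality of $\{e_n\}$, which holds for every $p\ge1$ — so applying $c_j$ to $x^{\ast}=\sum_mP_mx^{\ast}$ and using that $c_j$ vanishes on $\mathfrak{M}_m$ whenever $j\notin A_m$ identifies the $j$-th coordinate of $P_nx^{\ast}$ as $\xi_j(n)$, where $\xi_j(n)=1$ for $j\in A_n$ and $\xi_j(n)=0$ for $j\notin A_n$; that is, $P_nx^{\ast}=(\mathfrak{f})\sum_{j=1}^{\infty}\xi_j(n)e_j=\sum_{j\in A_n}e_j$, cf.~(\ref{pn_proj}). (In particular $\{\xi_j(n)\}_j\in\ell_p(\Delta^k)$ for every $n$, since $P_nx^{\ast}\in\ell_{p,k}\left(\{e_n\}\right)$; were this to fail for some $n$ we would already have a contradiction.)

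The heart of the argument is a uniform lower bound $\|P_nx^{\ast}\|_{p,k}\ge\delta/C>0$ for all $n$. By the definition of the norm of $\ell_{p,k}\left(\{e_n\}\right)$ — the exact analogue, with the $k$-th order difference and the convention $c_{1-j}=0$, of the formula for $\|\cdot\|_k$ recalled in the proof of Theorem~\ref{sharp} — one has $\|P_nx^{\ast}\|_{p,k}=\left\|\sum_md_m(n)e_m\right\|_{\ell_p}$, where $d_m(n)=\xi_m(n)-C_k^1\xi_{m-1}(n)+\dots+(-1)^k\xi_{m-k}(n)$. Put $m_0=m_0(n)=\min A_n$. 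Since $\xi_i(n)=0$ for every $i<m_0$ (including the fictitious indices $i\le0$), the only surviving term of $d_{m_0}(n)$ is $\xi_{m_0}(n)=1$. Applying the coordinate functional $e_{m_0}^{\ast}$ of the unconditional basis $\{e_m\}$ of $\ell_p$, whose norm is at most $C/\|e_{m_0}\|\le C/\delta$, gives $1=|d_{m_0}(n)|\le(C/\delta)\left\|\sum_md_m(n)e_m\right\|_{\ell_p}=(C/\delta)\|P_nx^{\ast}\|_{p,k}$. Hence $\|P_nx^{\ast}\|_{p,k}\ge\delta/C$ for all $n$, so $\|P_nx^{\ast}\|_{p,k}\nrightarrow0$ and the series $\sum_nP_nx^{\ast}$ cannot converge in $\ell_{p,k}\left(\{e_n\}\right)$ — contradicting the definition of a Schauder decomposition and proving the theorem.

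The step I expect to be the main obstacle is this uniform estimate, and inside it the transfer from the transparent model of the canonical unit-vector basis of $\ell_p$ to an arbitrary symmetric basis: one has to invoke that a symmetric basis is unconditional with index-independent basis constants and vector norms, and one has to make sure that the left-boundary convention $c_{1-j}=0$ is precisely what forces the $m_0$-th coordinate of the $k$-th difference of the indicator sequence to equal $1$ (rather than leaving us to hunt for a largest index of $A_n$, which may not exist when $A_n$ is infinite). Everything else — existence and boundedness of the $P_n$, the identification of $P_nx^{\ast}$ through the bounded coordinate functionals furnished by minimality, and the concluding remark that the terms of a norm-convergent series must tend to zero — is a verbatim rewrite of the proof of Theorem~\ref{sharp} with~(\ref{1Riesz}) deleted.
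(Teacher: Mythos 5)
Your proof is correct and follows essentially the same route the paper intends for this theorem (which it dispatches with ``using similar arguments''): the test element $x^{\ast}=(\mathfrak{f})\sum_{j}e_j$, the identification $P_n x^{\ast}=\sum_{j\in A_n}e_j$ via the bounded coordinate functionals, and a uniform lower bound on $\left\|P_n x^{\ast}\right\|$ coming from the coefficient $1$ that survives at $m_0=\min A_n$ in the $k$-th difference of the indicator sequence, exactly as in Theorem~\ref{sharp}. The only (harmless) divergence is that you extract the lower bound from unconditionality and seminormalization of the symmetric basis, whereas the paper's own toolkit (cf.\ the proof of Theorem~\ref{Expl B}) would invoke the two-sided estimate $\widetilde{m}\|\widetilde{y}\|^p\leq\sum_n|\alpha_n|^p\leq\widetilde{M}\|\widetilde{y}\|^p$ valid for any symmetric basis of $\ell_p$; both yield the same conclusion.
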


\section{An explicit form of the resolvent of the class of generators of $C_0$-groups}

Recall that if $p>1$ and $a_n\geq0$ for $n\in\mathbb{N}$, then the discrete Hardy inequality states that
\begin{equation}\label{Hardy}
    \sum\limits_{n=1}^{\infty} \left(\frac{1}{n} \sum\limits_{k=1}^{n} a_k\right)^p < \left(\frac{p}{p-1}\right)^p \sum\limits_{n=1}^{\infty} a_n^p
\end{equation}
with the exception of the case when $a_n=0$ for all $n\in\mathbb{N}$. Moreover, the constant $\left(\frac{p}{p-1}\right)^p$ is the best possible.

The following theorem is a central result of the present paper. It provides an explicit form of the resolvent for the class of generators $A_k:H_k\left(\{e_n\}\right) \supset D(A_k) \mapsto H_k\left(\{e_n\}\right)$, $k\in\mathbb{N}$, of $C_0$-groups from Theorem~\ref{construction}
and the description of the spectrum $\sigma(A_k)$ of generators $A_k.$
\begin{thm}\label{Expl}
Let $k\in\mathbb{N}$ and $A_k$ be the operator from Theorem~\ref{construction}. Then:
\begin{enumerate}[\upshape (i)]
\item $\sigma(A_k)= \sigma_p(A_k)=\{i f(n)\}_{1}^{\infty}$.
\item The resolvent of $A_k$ is given by the following formula:
\begin{equation}\label{resolvent not basis fk}
\left(A_k- \lambda I \right)^{-1} x= (\mathfrak{f})\sum\limits_{n=1}^{\infty} \frac{c_{n}  e_n}{i f(n)-\lambda}, \:\:\lambda \in  \rho(A_k)=\mathbb{C}\setminus \{i f(n)\}_{1}^{\infty},
\end{equation}
where $x=(\mathfrak{f})\sum\limits_{n=1}^{\infty} c_{n} e_n\in H_k\left(\{e_n\}\right).$
\end{enumerate}
\end{thm}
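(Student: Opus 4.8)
The plan is to establish (i) and (ii) simultaneously, since the spectrum statement follows once we know the resolvent formula is valid exactly for $\lambda \notin \{if(n)\}_1^\infty$. The starting point is the observation that $A_k$ acts diagonally with respect to the formal expansion $x = (\mathfrak{f})\sum c_n e_n$: it multiplies the $n$-th coefficient by $if(n)$. Therefore the only sensible candidate for $(A_k - \lambda I)^{-1}$ is the diagonal operator $R_\lambda$ given by the right-hand side of~(\ref{resolvent not basis fk}), namely multiplication of the $n$-th coefficient by $(if(n)-\lambda)^{-1}$. The work is to show that for $\lambda \notin \{if(n)\}_1^\infty$ this $R_\lambda$ is a genuine bounded operator on $H_k(\{e_n\})$ with range $D(A_k)$, and then to confirm $(A_k - \lambda I) R_\lambda = I$ on $H_k$ and $R_\lambda (A_k - \lambda I) = I$ on $D(A_k)$; conversely, that $if(n)$ are genuine eigenvalues (with eigenvectors $e_n$, by Theorem~\ref{construction}) so they lie in $\sigma_p(A_k)$, and that $\sigma(A_k)$ contains nothing else.

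First I would fix $\lambda \in \mathbb{C}\setminus\{if(n)\}_1^\infty$ and verify that $R_\lambda$ maps $H_k(\{e_n\})$ into itself boundedly. By the definition of the norm $\|\cdot\|_k$ recalled in the proof of Theorem~\ref{sharp} and the Riesz-basis equivalence~(\ref{1Riesz}), this reduces to showing that the linear map $\{c_n\} \mapsto \{c_n/(if(n)-\lambda)\}$ is bounded on $\ell_2(\Delta^k)$, i.e. that $\Delta^k\{c_n\}\in\ell_2$ implies $\Delta^k\{c_n/(if(n)-\lambda)\}\in\ell_2$ with control of norms. Writing $d_n = 1/(if(n)-\lambda)$, one expands $\Delta^k(d_n c_n)$ by the discrete Leibniz rule into a sum of terms $\binom{k}{j}\Delta^j d_{n}\,\Delta^{k-j}c_{n+j}$; the factor $\Delta^{k-j}c_{n+j}$ for $j\geq 1$ is not obviously $\ell_2$, so one must instead organize the estimate so that the only "top-order" term is $d_{n+k}\Delta^k c_n$ (bounded since $d_n\to 0$) and all remaining terms carry at least one difference $\Delta^j d_n$ with $j\geq 1$. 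Here is where $f\in\mathcal{S}_k$ enters decisively: the hypothesis $\{n^j\Delta^j f(n)\}\in\ell_\infty$ for $1\leq j\leq k$, combined with $f(n)\to+\infty$, yields $\Delta^j d_n = O(n^{-j}\cdot|d_n|\cdot(\text{bounded}))$-type bounds, more precisely $|\Delta^j d_n| \lesssim n^{-j}$ after using that $|if(n)-\lambda|$ is bounded below. The lower-order terms then involve $n^{-j}\Delta^{k-j}c_{n+j}$, and one passes from $\Delta^{k-j}c$ back to $\Delta^k c$ by repeated summation, at which point the discrete Hardy inequality~(\ref{Hardy}) with $p=2$ converts each such partial sum into an $\ell_2$ quantity controlled by $\|\Delta^k c\|_{\ell_2}$ plus lower-order data; iterating this $j$ times handles every term. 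This is the technical heart of the argument and the step I expect to be the main obstacle: getting the telescoping-plus-Hardy bookkeeping to close cleanly for general $k$, uniformly enough to also read off resolvent growth later in the paper.

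Once $R_\lambda$ is bounded on $H_k(\{e_n\})$, the algebraic identities are routine: applying $A_k - \lambda I$ coefficientwise gives $(if(n)-\lambda)\cdot c_n/(if(n)-\lambda) = c_n$, so $(A_k-\lambda I)R_\lambda = I$, and in particular $\mathrm{ran}\,R_\lambda \subseteq D(A_k)$ with $R_\lambda(A_k-\lambda I)=I$ on $D(A_k)$; uniqueness of the inverse then forces $R_\lambda = (A_k-\lambda I)^{-1}$, establishing~(\ref{resolvent not basis fk}) and $\mathbb{C}\setminus\{if(n)\}_1^\infty \subseteq \rho(A_k)$. For (i), each $e_n\in D(A_k)$ satisfies $A_k e_n = if(n)e_n$ with $e_n\neq 0$, so $\{if(n)\}_1^\infty \subseteq \sigma_p(A_k) \subseteq \sigma(A_k)$; combined with the resolvent computation this gives $\sigma(A_k)=\sigma_p(A_k)=\{if(n)\}_1^\infty$ and $\rho(A_k)=\mathbb{C}\setminus\{if(n)\}_1^\infty$, completing the proof. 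I would also remark that closedness of each $\{if(n)\}$-point in $\sigma(A_k)$ is automatic here even when $f(n)$ has finite accumulation points, since the complement has already been shown to be the resolvent set.
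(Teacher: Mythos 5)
Your proposal is correct and follows essentially the same route as the paper: the diagonal candidate $R_\lambda$, boundedness on $H_k\left(\{e_n\}\right)$ via the Leibniz-type expansion of $\Delta^k$ applied to $\{c_n/(if(n)-\lambda)\}$, the bounds $\left|\Delta^j (if(n)-\lambda)^{-1}\right|\lesssim n^{-j}$ coming from $f\in\mathcal{S}_k$ and $\inf_n|if(n)-\lambda|\geq a>0$, and the discrete Hardy inequality to absorb the lower-order terms (the paper writes this out only for $k=1$, where your expansion is exactly its two terms $\Sigma_1+\Sigma_2$, and likewise defers general $k$ to ``similar combinatorial'' arguments). The one step you should not fold into routine algebra is the verification that $R_\lambda x\in D(A_k)$, i.e.\ that $\left\{f(n)c_n/(if(n)-\lambda)\right\}\in\ell_2(\Delta^k)$: this must precede, not follow, the identity $(A_k-\lambda I)R_\lambda=I$, and the paper proves it by a separate Hardy-type estimate after writing $\frac{if(n)c_n}{if(n)-\lambda}=c_n+\frac{\lambda c_n}{if(n)-\lambda}$ --- the same technique you already set up, so this is a presentational rather than substantive gap.
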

\begin{proof}

First we prove the Theorem for the case $k=1$.

Let us prove that $\rho(A_1)=\mathbb{C}\setminus \{i f(n)\}_{1}^{\infty}$ is the resolvent set of the operator $A_1$ and the operator
$$A(\lambda) x = (\mathfrak{f})\sum\limits_{n=1}^{\infty} \frac{1}{i f(n)-\lambda} c_n e_n,$$
where $\lambda \neq i f(n)$ for all $n\in \mathbb{N}$ and $x=(\mathfrak{f})\sum\limits_{n=1}^{\infty} c_n e_n\in H_1\left(\{e_n\}\right),$
is the resolvent of $A_1$.

To this end denote $\lambda_n=i f(n),$ $n\in \mathbb{N}$.
Recall that the norm in Hilbert space $H_1\left(\{e_n\}\right)$ is
$$\|x\|_1=\left\|(\mathfrak{f})\sum\limits_{n=1}^{\infty}c_n e_n\right\|_1=\left\|\sum\limits_{n=1}^{\infty}\left(c_{n} - c_{n-1}\right) e_n\right\|,$$
where $\|\cdot \|$ denotes the norm in an initial Hilbert space $H$ and $c_0=0$, see~\cite{Sklyar3}.
Observe that
\begin{align*}
\left\|A(\lambda) x\right\|_1^2 &= \left\|(\mathfrak{f})\sum\limits_{n=1}^{\infty}\frac{c_n e_n}{\lambda_n-\lambda} \right\|_1^2=\left\|\frac{c_1 e_1}{\lambda_1-\lambda} + \sum\limits_{n=2}^{\infty}\left( \frac{c_n}{\lambda_n-\lambda}-\frac{c_{n-1}}{\lambda_{n-1}-\lambda} \right) e_n \right\|^2\\
&=\left\|\frac{c_1 e_1}{\lambda_1-\lambda}+\sum\limits_{n=2}^{\infty}\left( \frac{c_n}{\lambda_n-\lambda} - \frac{c_{n-1}}{\lambda_n-\lambda} + \frac{c_{n-1}}{\lambda_n-\lambda} -\frac{c_{n-1}}{\lambda_{n-1}-\lambda} \right)  e_n \right\|^2\\
&\leq \left( \left\|\sum\limits_{n=1}^{\infty} \frac{c_n-c_{n-1}}{\lambda_n-\lambda}  e_n \right\| + \left\|\sum\limits_{n=2}^{\infty}\left( \frac{1}{\lambda_n-\lambda}-\frac{1}{\lambda_{n-1}-\lambda} \right) c_{n-1} e_n \right\| \right)^2\\
&= \left(\Sigma_1 +\Sigma_2\right)^2 \leq 2 \Sigma_1^2 + 2\Sigma_2^2.
\end{align*}

Now consider $\lambda:\:\inf\limits_{n\in \mathbb{N}} |\lambda_n-\lambda|\geq a>0$.
Since $\{e_n\}_{n=1}^{\infty}$ is a Riesz basis of a Hilbert space $H$ (see Theorem~\ref{construction}), there exist two constants $M\geq m>0$ such that for every
$y=\sum\limits_{n=1}^{\infty} \alpha_n e_n\in H$ we have
\begin{equation}\label{Riesz}
  m \|y\|^2 \leq \sum\limits_{n=1}^{\infty} |\alpha_n|^2 \leq M \|y\|^2.
\end{equation}
Applying~(\ref{Riesz}) we obtain that
\begin{equation}\label{est Sigma 1}
\Sigma_1^2 \leq \frac{1}{m} \sum\limits_{n=1}^{\infty} \frac{|c_n-c_{n-1}|^2}{|\lambda_n-\lambda|^2}\leq \frac{1}{m a^2} \sum\limits_{n=1}^{\infty}|c_n-c_{n-1}|^2 \leq \frac{M}{m a^2} \|x\|_1^2.
\end{equation}

Since $$\frac{1}{\lambda_n-\lambda}-\frac{1}{\lambda_{n-1}-\lambda} = \frac{\lambda_{n-1}-\lambda_n}{(\lambda_n-\lambda)(\lambda_{n-1}-\lambda)}$$ for $n\geq 2$,
by virtue of~(\ref{Riesz}) we conclude that
$$\Sigma_2^2 \leq \frac{1}{m} \sum\limits_{n=2}^{\infty}  \frac{|\lambda_{n-1}-\lambda_n|^2 |c_{n-1}|^2}{|\lambda_n-\lambda|^2 |\lambda_{n-1}-\lambda|^2}\leq \frac{1}{m a^4} \sum\limits_{n=2}^{\infty} \frac{|c_{n-1}|^2}{n^2} n^2 |\Delta f(n)|^2.$$
Note that $\left\{f(n)\right\}_{n=1}^{\infty}\in\mathcal{S}_1$, hence $n|\Delta f(n)|\in \ell_{\infty}$ by the definition of the class $\mathcal{S}_1$, see Definition~\ref{Class Sk}.
Denote $$C=\sup\limits_{n\in\mathbb{N}} n|\Delta f(n)|.$$
Then, since for $n\geq 2$
$$c_{n-1}= \sum\limits_{j=1}^{n-1} \left(c_j-c_{j-1}\right),$$
we obtain that
\begin{align*}
\Sigma_2^2 &\leq \frac{C^2}{m a^4} \sum\limits_{n=2}^{\infty} \frac{|c_{n-1}|^2}{n^2}  =  \frac{C^2}{m a^4} \sum\limits_{n=2}^{\infty}  \left(\frac{1}{n} \left|\sum\limits_{j=1}^{n-1}(c_j-c_{j-1})\right|\right)^2 \\
&\leq \frac{C^2}{m a^4} \sum\limits_{n=1}^{\infty} \left(\frac{1}{n} \sum\limits_{j=1}^{n}|c_j-c_{j-1}|\right)^2.
\end{align*}

By virtue of the Hardy inequality~(\ref{Hardy}) for $p=2$ and~(\ref{Riesz}) we obtain
$$\Sigma_2^2  \leq \frac{4C^2}{m a^4}  \sum\limits_{n=1}^{\infty} |c_n-c_{n-1}|^2 \leq  \frac{4 MC^2}{m a^4} \|x\|_1^2.$$
Combining this with~(\ref{est Sigma 1}) we finally arrive at the estimate
\begin{equation}\label{alambda}
\left\|A(\lambda) x\right\|_1^2 \leq \left(\frac{2}{a^2}+\frac{8C^2}{a^4} \right) \frac{M}{m} \|x\|_1^2
\end{equation}
and $A(\lambda)\in[H_1\left(\{e_n\}\right)],$ i.e. $A(\lambda)$ is a linear bounded operator.

Further we choose arbitrarily
$$\lambda:\:\inf\limits_{n\in \mathbb{N}} |if(n)-\lambda|\geq a>0,$$
fix $x=(\mathfrak{f})\sum\limits_{n=1}^{\infty} c_n e_n\in H_1\left(\{e_n\}\right)$ and demonstrate that $A(\lambda) x \in D(A_1).$ For this purpose, taking into consideration~(\ref{Domain_k}), it is sufficient to prove that
\begin{equation}\label{da}
\left\{\frac{i f(n) \cdot c_n}{i f(n)-\lambda}\right\}_{n=1}^{\infty}\in \ell_2(\Delta),
\end{equation}
where
$\ell_2(\Delta)=\left\{s=\{\alpha_n\}_{n=1}^{\infty}:\:\Delta s \in \ell_2\right\}$ and $\Delta$ is a difference operator given by
$$\Delta=\left( \begin{array}{ccccc} 1 & 0 & 0 & 0 & \dots\\ -1 & 1 & 0 & 0 & \dots\\ 0 & -1 & 1 & 0 & \dots\\ 0 & 0 & -1 & 1 & \dots\\ \vdots & \vdots & \vdots & \vdots & \ddots\end{array} \right).$$

To this end observe that
\begin{align*}
\sum\limits_{n=2}^{\infty} &\left| \frac{i f(n)\cdot c_n}{i f(n) -\lambda} - \frac{if(n-1) \cdot c_{n-1}}{if(n-1)-\lambda} \right|^2\\
&=\sum\limits_{n=2}^{\infty} \left| \left(c_n+\frac{\lambda c_n}{if(n) -\lambda}\right) -\left(c_{n-1}+\frac{\lambda c_{n-1}}{if(n-1) -\lambda}\right)\right|^2\\
 & \qquad\leq 2\sum\limits_{n=1}^{\infty} |c_n-c_{n-1}|^2 +  2|\lambda|^2\sum\limits_{n=2}^{\infty} \left|\frac{ c_n}{if(n) -\lambda} -\frac{c_{n-1}}{if(n-1) -\lambda}  \right|^2\\
&\qquad\qquad\leq 2M \|x\|_1^2 + 2|\lambda|^2 \Xi.
\end{align*}

For any $n\geq 2$ we have
\begin{align*}
\frac{ c_n}{if(n) -\lambda} -\frac{c_{n-1}}{if(n-1) -\lambda} &=  \frac{ c_n}{if(n) -\lambda} -\frac{ c_{n-1}}{if(n) -\lambda} + \frac{ c_{n-1}}{if(n) -\lambda} - \frac{c_{n-1}}{if(n-1) -\lambda}\\
&= \frac{c_n- c_{n-1}}{if(n) -\lambda} + c_{n-1} \left( \frac{i(f(n-1)- f(n))}{(if(n) -\lambda)(if(n-1) -\lambda)}\right).
\end{align*}
It follows that
\begin{align*}
\Xi &\leq 2 \sum\limits_{n=2}^{\infty} \left|\frac{c_n- c_{n-1}}{if(n) -\lambda}  \right|^2 + 2 \sum\limits_{n=2}^{\infty} \left|c_{n-1} \frac{f(n-1)- f(n)}{(if(n) -\lambda)(if(n-1) -\lambda)} \right|^2\\
&\leq \frac{2M}{a^2}\|x\|_1^2+ \frac{2}{a^4} \sum\limits_{n=2}^{\infty} n^2\left|\Delta f(n)\right|^2 \frac{\left|c_{n-1}\right|^2}{n^2}\\
&\leq \frac{2M}{a^2} \|x\|_1^2 + \frac{2C^2}{a^4} \sum\limits_{n=2}^{\infty}\frac{\left|c_{n-1}\right|^2}{n^2}\\
&\leq \frac{2M}{a^2} \|x\|_1^2 + \frac{2C^2}{a^4}  \sum\limits_{n=1}^{\infty} \left(\frac{1}{n} \sum\limits_{j=1}^{n}|c_j-c_{j-1}|\right)^2.
\end{align*}

By virtue of the Hardy inequality~(\ref{Hardy}) for $p=2$ we have
$$\Xi \leq \frac{2M}{a^2} \|x\|_1^2 + \frac{8 MC^2}{a^4} \|x\|_1^2.$$
Hence~(\ref{da}) holds.
Therefore $A(\lambda) x \in D(A_1)$ and thus,
\begin{equation}\label{18888}
\left(A_1-\lambda I \right)A(\lambda) x = \left(A_1-\lambda I \right)(\mathfrak{f})\sum\limits_{n=1}^{\infty} \frac{1}{\lambda_n-\lambda} c_n e_n =(\mathfrak{f})\sum\limits_{n=1}^{\infty} c_n e_n =x.
\end{equation}

Now take $z\in D(A_1)$ and consider $x=\left(A_1-\lambda I \right) z.$
Then by~(\ref{18888}) we have that
$$x=\left(A_1-\lambda I \right)A(\lambda) x = \left(A_1-\lambda I \right)A(\lambda)\left(A_1-\lambda I \right) z.$$
Consequently,
$$\left(A_1-\lambda I \right)(z - A(\lambda)\left(A_1-\lambda I \right) z)=x-x=0.$$
Since $\lambda \neq if(n),$ $n\in \mathbb{N},$ then for every $z\in D(A_1)$ we have
$$z = A(\lambda)\left(A_1-\lambda I \right)z,$$
and,
combining this equality with~(\ref{18888}), we infer that $\lambda \in \rho(A_1)$ and $A(\lambda) = \left(A_1-\lambda I \right)^{-1}$ is the resolvent of $A_1$.
Besides, we proved that
$$\left\{ \lambda \in \mathbb{C}: \:\lambda \neq if(n), \: n\in \mathbb{N}\right\}\subset \rho(A_1).$$

Finally we observe that since
$\lambda_n \in \sigma(A_1),\: n\in\mathbb{N},$
operator
$A_1$ is closed as the generator of the $C_0$-group by Theorem~\ref{construction},
the spectrum of closed operator is closed set and
the set $\{i f(n)\}_{1}^{\infty}$ contains all its limit points,
then
$\sigma(A_1)= \sigma_p(A_1)= \{i f(n)\}_{1}^{\infty}$ and
$$\rho(A_1)=\left\{ \lambda \in \mathbb{C}: \:\lambda \neq if(n), \: n\in \mathbb{N} \right\}=\mathbb{C} \setminus \sigma(A_1).$$

The proof in the case $k\geq 2$ is based on a combination of ideas of the proof for the case $k=1$ with technical combinatorial elements like in the proof of Theorem~11 from~\cite{Sklyar3} and can be performed similarly to the above.
\end{proof}

Define operators
$$\widetilde{A_k}:\ell_{p,k}\left(\{e_n\}\right) \supset D\left(\widetilde{A_k}\right) \mapsto \ell_{p,k}\left(\{e_n\}\right)$$
on a class of Banach spaces $\ell_{p,k}\left(\{e_n\}\right),$ $p>1,$ $k\in\mathbb{N}$, see~\cite{Sklyar3} or Section~2, as follows:
\begin{equation}\label{operator on Banach space}
\widetilde{A_k} x=\widetilde{A_k} (\mathfrak{f})\sum\limits_{n=1}^{\infty} c_{n} e_n= (\mathfrak{f})\sum\limits_{n=1}^{\infty} i f(n) \cdot c_{n} e_n,
\end{equation}
where $\left\{f(n)\right\}_{n=1}^{\infty}\in\mathcal{S}_k$, with domain
\begin{equation}\label{Domain_k p}
    D\left(\widetilde{A_k}\right)=\left\{x= (\mathfrak{f})\sum\limits_{n=1}^{\infty} c_{n} e_n \in \ell_{p,k}\left(\{e_n\}\right):\:  \{f(n) \cdot c_{n}\}_{n=1}^{\infty}\in \ell_p(\Delta^k)\right\}.
\end{equation}

Then, by virtue of Theorem~16 in~\cite{Sklyar3}, $\widetilde{A_k}$ generates the $C_0$-group $\left\{e^{\widetilde{A_k} t} \right\}_{t\in \mathbb{R}}$ on $\ell_{p,k}\left(\{e_n\}\right)$, which acts on
$\ell_{p,k}\left(\{e_n\}\right)$ for every $t\in\mathbb{R}$ by the formula
\begin{equation}\label{group}
e^{\widetilde{A_k} t} x=e^{\widetilde{A_k} t}(\mathfrak{f})\sum\limits_{n=1}^{\infty} c_n e_n =(\mathfrak{f})\sum\limits_{n=1}^{\infty} e^{i t f(n)} c_{n} e_n.
\end{equation}
An explicit form of the resolvent and the description of the spectrum $\sigma\left(\widetilde{A_k}\right)$ of generators $\widetilde{A_k}$ are provided by the following theorem,
similar to the Theorem~\ref{Expl}.

\begin{thm}\label{Expl B}
Let $k\in\mathbb{N}$, $p>1,$ and $\widetilde{A_k}$ be the operator defined above. Then:
\begin{enumerate}[\upshape (i)]
\item $\sigma\left(\widetilde{A_k}\right)= \sigma_p\left(\widetilde{A_k}\right)=\{i f(n)\}_{1}^{\infty}$.
\item The resolvent of $\widetilde{A_k}$ is given by the following formula:
\begin{equation}\label{resolvent not basis fk p}
\left(\widetilde{A_k}- \lambda I \right)^{-1} x= (\mathfrak{f})\sum\limits_{n=1}^{\infty} \frac{c_{n}  e_n}{i f(n)-\lambda}, \:\:\lambda \in  \rho\left(\widetilde{A_k}\right)=\mathbb{C}\setminus \{i f(n)\}_{1}^{\infty},
\end{equation}
where $x=(\mathfrak{f})\sum\limits_{n=1}^{\infty} c_{n} e_n\in \ell_{p,k}\left(\{e_n\}\right).$
\end{enumerate}
\end{thm}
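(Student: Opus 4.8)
The plan is to follow the scheme of the proof of Theorem~\ref{Expl} almost verbatim, with only two adjustments to the Banach setting. First, the Hilbert-space estimate~(\ref{Riesz}) gets replaced by the fact that a symmetric basis $\{e_n\}_{n=1}^{\infty}$ of $\ell_p$ is equivalent to the unit vector basis, so that there exist constants $M\ge m>0$ with $m\,\|\{\alpha_n\}\|_{\ell_p}^p\le\bigl\|\sum_{n}\alpha_n e_n\bigr\|^p\le M\,\|\{\alpha_n\}\|_{\ell_p}^p$ for every $\{\alpha_n\}\in\ell_p$; equivalently, $\|x\|_{p,k}\asymp\|\Delta^k\{c_n\}\|_{\ell_p}$ for $x=(\mathfrak{f})\sum_{n}c_n e_n\in\ell_{p,k}(\{e_n\})$. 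Second, the discrete Hardy inequality~(\ref{Hardy}) is invoked for the given exponent $p>1$ rather than for $p=2$. The rest of the argument is exponent-free, so I would reuse it directly.

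I would start with $k=1$. Writing $\lambda_n=if(n)$ and $A(\lambda)x=(\mathfrak{f})\sum_{n}\frac{c_n}{\lambda_n-\lambda}e_n$ for $\lambda$ with $a:=\inf_n|\lambda_n-\lambda|>0$, one has $\|A(\lambda)x\|_{p,1}^p\asymp\sum_{n}\bigl|\frac{c_n}{\lambda_n-\lambda}-\frac{c_{n-1}}{\lambda_{n-1}-\lambda}\bigr|^p$ (with $c_0=0$), and I would split each summand by
$$\frac{c_n}{\lambda_n-\lambda}-\frac{c_{n-1}}{\lambda_{n-1}-\lambda}=\frac{c_n-c_{n-1}}{\lambda_n-\lambda}+c_{n-1}\,\frac{\lambda_{n-1}-\lambda_n}{(\lambda_n-\lambda)(\lambda_{n-1}-\lambda)}.$$
The first part contributes at most $a^{-p}\sum_{n}|c_n-c_{n-1}|^p\lesssim\|x\|_{p,1}^p$. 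For the second part I would use $\bigl|\frac{\lambda_{n-1}-\lambda_n}{(\lambda_n-\lambda)(\lambda_{n-1}-\lambda)}\bigr|\le a^{-2}|\Delta f(n)|\le a^{-2}C n^{-1}$, where $C=\sup_n n|\Delta f(n)|<\infty$ because $\{f(n)\}\in\mathcal{S}_1$, and then $c_{n-1}=\sum_{j=1}^{n-1}(c_j-c_{j-1})$ gives $\tfrac{|c_{n-1}|}{n}\le\tfrac1n\sum_{j=1}^{n}|c_j-c_{j-1}|$, so~(\ref{Hardy}) bounds the second part by $a^{-2p}C^p\bigl(\tfrac{p}{p-1}\bigr)^p\sum_{n}|c_n-c_{n-1}|^p\lesssim\|x\|_{p,1}^p$. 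This yields $A(\lambda)\in[\ell_{p,1}(\{e_n\})]$. Next I would check $A(\lambda)x\in D(\widetilde{A_1})$ via $\frac{f(n)c_n}{if(n)-\lambda}=\frac1i\bigl(c_n+\frac{\lambda c_n}{if(n)-\lambda}\bigr)$, whence $\Delta\bigl\{\frac{f(n)c_n}{if(n)-\lambda}\bigr\}=\frac1i\bigl(\Delta\{c_n\}+\lambda\,\Delta\bigl\{\frac{c_n}{\lambda_n-\lambda}\bigr\}\bigr)\in\ell_p$ by the previous step, i.e.~(\ref{Domain_k p}) holds. The identities $(\widetilde{A_1}-\lambda I)A(\lambda)=I$ on $\ell_{p,1}(\{e_n\})$ and $A(\lambda)(\widetilde{A_1}-\lambda I)=I$ on $D(\widetilde{A_1})$ — the second using that $\widetilde{A_1}-\lambda I$ is injective because $\lambda\ne if(n)$ for all $n$ — then give $\lambda\in\rho(\widetilde{A_1})$ and $A(\lambda)=(\widetilde{A_1}-\lambda I)^{-1}$, so $\{\lambda\in\mathbb{C}:\lambda\ne if(n)\ \forall n\}\subset\rho(\widetilde{A_1})$. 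Since $\widetilde{A_1}e_n=if(n)e_n$, each $if(n)\in\sigma_p(\widetilde{A_1})$; the spectrum of the closed operator $\widetilde{A_1}$ is closed, and $f(n)\to+\infty$ forces $\{if(n)\}_1^{\infty}$ to have no finite accumulation point, so I would conclude $\sigma(\widetilde{A_1})=\sigma_p(\widetilde{A_1})=\{if(n)\}_1^{\infty}$ and $\rho(\widetilde{A_1})=\mathbb{C}\setminus\{if(n)\}_1^{\infty}$.

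For $k\ge2$ the plan is identical in spirit, but the single telescoping identity above must be replaced by the combinatorial expansion of $\Delta^k\bigl\{\frac{c_n}{\lambda_n-\lambda}\bigr\}$ as a finite sum of terms of the shape $D_j\cdot\Delta^{i}\{c_n\}$, where $D_j$ is a $j$-fold finite difference of $(\lambda_n-\lambda)^{-1}$ and $0\le i\le k$; each such term is then dominated by combining the bounds $\{n^j\Delta^jf(n)\}\in\ell_\infty$ of Definition~\ref{Class Sk} with repeated applications of~(\ref{Hardy}), precisely as in the proof of Theorem~11 of~\cite{Sklyar3} and of Theorem~\ref{Expl}. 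I expect this combinatorial bookkeeping to be the main obstacle: there is no conceptually new ingredient beyond the $k=1$ case, but keeping track of the cross-terms and of the powers of $n$ attached to each of them is the technically delicate part.
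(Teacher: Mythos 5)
Your proposal is correct and takes essentially the same approach as the paper: the paper's own proof consists precisely of substituting the symmetric-basis norm equivalence $\widetilde{m}\|\widetilde{y}\|^p\le\sum_n|\alpha_n|^p\le\widetilde{M}\|\widetilde{y}\|^p$ for the Riesz-basis estimate~(\ref{Riesz}) and then stating that the argument repeats the lines of Theorem~\ref{Expl}, which is exactly what you do (with the Hardy inequality~(\ref{Hardy}) invoked for the given $p>1$ and the $k\ge2$ case deferred to the combinatorics of Theorem~11 of~\cite{Sklyar3}). Your write-up simply fills in the $k=1$ details that the paper leaves implicit.
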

\begin{proof}
If $\{e_n\}_{n=1}^{\infty}$ is a symmetric basis of $\ell_p$, then there exist two constants $\widetilde{M}\geq \widetilde{m}>0$ such that for every
$\widetilde{y}=\sum\limits_{n=1}^{\infty} \alpha_n e_n\in \ell_p$ we have
$$\widetilde{m} \|\widetilde{y}\|^p \leq \sum\limits_{n=1}^{\infty} |\alpha_n|^p \leq \widetilde{M} \|\widetilde{y}\|^p,
$$
see~\cite{Sklyar3} for details. Thus the proof repeats ideas and lines of the proof of Theorem~\ref{Expl}.
\end{proof}
\section{Asymptotic behaviour of the resolvent}

For any closed linear operator $A$ on a Hilbert space $H$ the following bound for the norm of the resolvent is true:
\begin{equation}\label{resolvent bound}
\left\| \left(A-\lambda I\right)^{-1}\right\| \geq \frac{1}{\varrho\left(\lambda, \sigma(A)\right)},
\end{equation}
provided that $\lambda\in \rho(A)$. Here $\varrho\left(\lambda, \sigma(A)\right)$ is the standard  Euclidean distance between the point $\lambda$ and the spectrum $\sigma(A)$.
If $A$ is normal operator on $H$, then by the spectral theorem for normal operators we immediately obtain that
\begin{equation}\label{resolvent b}
\left\| \left(A-\lambda I\right)^{-1}\right\| = \frac{1}{\varrho\left(\lambda, \sigma(A)\right)},
\end{equation}
i.e. the inequality~(\ref{resolvent bound}) turns into an equality. However, equality~(\ref{resolvent b}) is not satisfied even for $2\times2$ nonselfadjoint matrix
$$B=\left(\begin{array}{cc} 1 & 1 \\ 0 & 1 \end{array} \right),$$
since $\sigma(B)=\{1\}$ and
$$\left(B-\lambda I\right)^{-1}=\left(\begin{array}{cc} \frac{1}{1-\lambda} & -\frac{1}{(1-\lambda)^2}\\ 0 & \frac{1}{1-\lambda} \end{array} \right),\: \lambda\neq 1.$$

This observation partially confirms the following commonly known thought: the spectrum does not contain much information about the behaviour of NSA operator $A$, see also~\cite{Davies1},~\cite{Davies2}.
For this reason the notion of pseudospectra was introduced and came into play. The pseudospectra of $A$ is the family of sets
$$\left\{\lambda\in\mathbb{C}:\: \left\| \left(A-\lambda I\right)^{-1}\right\|  \geq \frac{1}{\varepsilon}\right\}_{\varepsilon>0},$$
see~\cite{Davies1},~\cite{Davies2}, and it describes the behaviour of NSA operator $A$ much more effectively than the spectrum.

Another way to control the resolvent is to obtain for it estimates from above, see~\cite{Davies1},~\cite{Davies3}, and works~\cite{Malejki}, \cite{Eisner1}, \cite{Eisner2}, \cite{Eisner3}, where direct links between the polynomial growth of the $C_0$-semigroup $\left\{e^{At} \right\}_{t\geq 0}$ in $t$ and the behaviour of the corresponding resolvent $\left(A-\lambda I\right)^{-1}$ were established.
Note that $C_0$-semigroups and $C_0$-groups with polynomial growth condition naturally appear in theory and applications of evolution equations, see, e.g., \cite{Goldstein}, \cite{Sklyar2}.

In 1985 A.I.~Miloslavskii~\cite{Miloslavskii} obtained sufficient conditions for $C_0$-semigroup on a Hilbert space to be polynomially bounded in terms of the behaviour of eigenvalues of the corresponding generator, under the assumption~(\ref{question}).
B.A.~Barnes \cite{Barnes} in 1989 obtained a number of interesting properties for generators of polynomially bounded $C_0$-groups on Banach spaces, but only in the case when generators are bounded. In 2001 M.~Malejki~\cite{Malejki} obtained necessary and sufficient conditions for a closed densely defined operator on a Banach space to be the generator of polynomially bounded $C_0$-group, in terms of the behaviour of the resolvent.
In 2005 T.~Eisner generalized results of Malejki to the case of polynomially bounded $C_0$-semigroups~\cite{Eisner2}.
The keystone of main results of works~\cite{Malejki},~\cite{Eisner2} are certain conditions on the integrability for the resolvent
$$ \left(A - (a+i\cdot)I \right)^{-1},$$
or the square of it,
along lines parallel to the imaginary axis, where $a>0$.

Finally, in 2007 T.~Eisner and H.~Zwart~\cite{Eisner1} obtained more simple characterizations of polynomial growth of a $C_0$-semigroup
in terms of the first power of the resolvent of the generator. This was done in the class of operators, which have $p$-integrable resolvent for some $p>1.$
This class includes $C_0$-semigroups on Hilbert spaces and analytic $C_0$-semigroups on Banach spaces, see~\cite{Eisner1} for details.
For the overview and the prehistory of these results we refer to the Chapter~III of the monograph~\cite{Eisner3}, where open questions, useful remarks and illustrative examples may also be found.

To describe the growth properties of the resolvent of operators from~Theorem~\ref{construction} (see also Theorem~\ref{Expl}) and  Theorem~\ref{Expl B} we use Proposition~12 from~\cite{Sklyar3} on the polynomial boundedness of the constructed $C_0$-groups $\left\{e^{A_k t} \right\}_{t\in \mathbb{R}}$ on $H_k\left(\{e_n\}\right)$ and apply results from~\cite{Eisner1},~\cite{Eisner2},~\cite{Malejki}.

\begin{thm}\label{res asympt}
Let $k\in\mathbb{N}$ and $A_k$ be the operator from Theorem~\ref{construction}.

Then the following assertions are true:
\begin{enumerate}
\item For every $a>0$ there exists a constant $C>0$ such that
\begin{equation}\label{r est}
\left\|\left(A_k- \lambda I \right)^{-1}\right\|\leq \frac{C}{\left|\Re (\lambda)\right|^{k+1}}\quad\text{for all}\quad \lambda: 0<\left|\Re (\lambda)\right|<a,
\end{equation}
\begin{equation}\label{r est 2}
\left\|\left(A_k- \lambda I \right)^{-1}\right\|\leq C\quad\text{for all}\quad \lambda:\left|\Re (\lambda)\right| \geq a.\qquad\qquad\:
\end{equation}
\item There exists a constant $M>0$ such that for every $a>0$ and all $x,y\in H_k\left(\{e_n\}\right)$ we have
$$\int\limits_{-\infty}^{\infty} \left|\bigl<\left(A_k\pm (a+is) I \right)^{-2}x,y\bigr> \right| ds\leq \frac{M}{a} \left(1+\frac{1}{a^{2k}} \right)\|x\|\|y\|.$$
\item There exists a constant $K>0$ such that for every $a>0$ and all $x,y\in H_k\left(\{e_n\}\right)$ we have
$$\int\limits_{-\infty}^{\infty} \left\| \left(A_k\pm (a+is) I \right)^{-1}x\right\|^2 ds \leq \frac{K}{a} \left(1+\frac{1}{a^{2k}} \right)\|x\|^2,$$
and
$$\int\limits_{-\infty}^{\infty} \left\| \left(A_k^{\ast}\pm (a+is) I \right)^{-1}y\right\|^2 ds \leq \frac{K}{a} \left(1+\frac{1}{a^{2k}} \right)\|y\|^2.$$
\end{enumerate}

\end{thm}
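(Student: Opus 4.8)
The plan is to derive all three assertions from a single input: Proposition~12 of~\cite{Sklyar3}, which asserts that the $C_0$-group $\{e^{A_kt}\}_{t\in\mathbb{R}}$ is polynomially bounded of degree $k$, i.e.\ there is $M_0>0$ with $\|e^{A_kt}\|\le M_0(1+|t|)^k$ for all $t$. Since $\{e^{-A_kt}\}_{t\in\mathbb{R}}$ is the group generated by $-A_k$ and $\{(e^{\pm A_kt})^*\}=\{e^{\pm A_k^*t}\}$ are the adjoint groups, all four groups obey the same bound with the same $M_0$; in particular the growth bounds of $\pm A_k$ and $\pm A_k^*$ all vanish, so for $\Re\lambda>0$ the Laplace representation $(A_k-\lambda I)^{-1}=-\int_0^\infty e^{-\lambda t}e^{A_kt}\,dt$ converges absolutely, for $\Re\lambda<0$ one has $(A_k-\lambda I)^{-1}=\int_0^\infty e^{\lambda t}e^{-A_kt}\,dt$, and the analogous identities hold for $A_k^*$. (Equivalently, assertions (1)--(3) are the specializations to $\{e^{A_kt}\}$ of the resolvent characterizations of polynomially bounded $C_0$-semigroups and groups established in~\cite{Malejki},~\cite{Eisner2},~\cite{Eisner1}, applied through the polynomial bound of~\cite{Sklyar3}.) With this at hand only elementary integral estimates remain.

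For part (1) I would feed the Laplace representation into the triangle inequality: for every $\lambda$ with $\Re\lambda\ne0$,
$$\|(A_k-\lambda I)^{-1}\|\le M_0\int_0^\infty e^{-|\Re\lambda|\,t}(1+t)^k\,dt=M_0\sum_{j=0}^k\binom kj\frac{j!}{|\Re\lambda|^{\,j+1}}.$$
For $0<|\Re\lambda|<a$ one factors out $|\Re\lambda|^{-(k+1)}$ and bounds $|\Re\lambda|^{k-j}\le a^{k-j}$, which yields~(\ref{r est}); for $|\Re\lambda|\ge a$ each summand is at most $\binom kj j!\,a^{-(j+1)}$, which yields~(\ref{r est 2}). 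Since $\sigma(A_k)=\{if(n)\}\subset i\mathbb{R}$ by Theorem~\ref{Expl}, the two ranges of $\lambda$ together exhaust $\rho(A_k)$.

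For part (3) I would fix $a>0$ and $x\in H_k(\{e_n\})$. The function $t\mapsto e^{-at}e^{A_kt}x$, extended by $0$ to $t<0$, lies in $L^2(\mathbb{R};H_k(\{e_n\}))$, and its Fourier transform in $t$ is, up to a sign, $s\mapsto(A_k-(a+is)I)^{-1}x$; hence by the vector-valued Plancherel theorem
$$\int_{-\infty}^{\infty}\|(A_k-(a+is)I)^{-1}x\|^2\,ds=2\pi\int_0^\infty e^{-2at}\|e^{A_kt}x\|^2\,dt\le 2\pi M_0^2\|x\|^2\int_0^\infty e^{-2at}(1+t)^{2k}\,dt.$$
The last integral equals $\sum_{j=0}^{2k}\binom{2k}{j}j!\,(2a)^{-(j+1)}$, which is at most $\frac{K}{a}(1+a^{-2k})$ for a suitable $K=K(k)$ (treat $a\ge1$ and $a<1$ separately). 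This gives the first $L^2$-bound; the version with $-a+is$ follows identically from the group $\{e^{-A_kt}\}$, and the two bounds for $A_k^*$ follow verbatim from the adjoint groups, since these are polynomially bounded of the same degree $k$ with the same $M_0$.

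For part (2) I would write $\mu=a+is$ and use $((A_k-\mu I)^{-1})^*=(A_k^*-\bar\mu I)^{-1}$ to get, for all $x,y$,
$$|\langle(A_k-\mu I)^{-2}x,y\rangle|=|\langle(A_k-\mu I)^{-1}x,\,(A_k^*-\bar\mu I)^{-1}y\rangle|\le\|(A_k-\mu I)^{-1}x\|\,\|(A_k^*-\bar\mu I)^{-1}y\|.$$
Integrating in $s$, then applying the Cauchy--Schwarz inequality in $L^2(\mathbb{R},ds)$ and the substitution $s\mapsto-s$ in the second factor, reduces the right-hand side to the product of the square roots of the two $L^2$-bounds from part (3), i.e.\ it is $\le\frac{K}{a}(1+a^{-2k})\|x\|\,\|y\|$; the $+$ sign is handled by the same computation with $-A_k$ in place of $A_k$. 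The conceptual work -- polynomial boundedness of degree exactly $k$ -- is already carried out in~\cite{Sklyar3}, so the remaining argument is essentially mechanical; the only points I expect to require care are (i) checking that $-A_k$ and $A_k^*$ inherit the same polynomial bound, so that all four Laplace/Fourier representations are legitimate on the whole open half-planes, and (ii) extracting the precise factor $\frac1a(1+a^{-2k})$ uniformly in $a>0$ from $\int_0^\infty e^{-2at}(1+t)^{2k}\,dt$. I do not anticipate any genuine obstacle beyond these routine matters.
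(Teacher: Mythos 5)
Your proposal is correct and follows essentially the same route as the paper: both rest on the polynomial bound $\|e^{A_kt}\|\leq \mathfrak{p}_k(|t|)$ from Proposition~12 of~\cite{Sklyar3} together with the Laplace-transform representation of the resolvent on the two open half-planes. The only difference is one of self-containedness: where the paper disposes of parts~2 and~3 by citing Theorem~2.6 of~\cite{Eisner2} and Theorem~3 of~\cite{Malejki}, you unpack those citations into the underlying vector-valued Plancherel identity and the Cauchy--Schwarz argument, and your elementary estimates (including the uniform extraction of the factor $\tfrac{1}{a}(1+a^{-2k})$ by treating $a\geq 1$ and $a<1$ separately) all check out.
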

\begin{proof} 1. Let $\left\{e^{A_k t} \right\}_{t\in \mathbb{R}}$ be the $C_0$-group corresponding to the operator $A_k$, see Theorem~\ref{construction}. Then, by virtue of Proposition~12 from~\cite{Sklyar3}, $C_0$-groups $\left\{e^{A_k t} \right\}_{t\in \mathbb{R}}$ grow in norm as $t\to\pm\infty$ but there exists a polynomial $\mathfrak{p}_k$ with positive coefficients such that
$$\deg \mathfrak{p}_k =k$$
and for every $t\in\mathbb{R}$ we have
$$\left\|e^{A_k t}\right\| \leq \mathfrak{p}_k (|t|).$$
So $\left\{e^{A_k t} \right\}_{t\in \mathbb{R}}$ belongs to the class of polynomially bounded $C_0$-groups.
Hence the growth bound $\omega_{0,k}$ of the $C_0$-group $\left\{e^{A_k t} \right\}_{t\in \mathbb{R}}$ equals to zero, i.e.
$$\omega_{0,k}=\lim\limits_{t\to\pm\infty} \frac{\ln\left\|e^{A_k t} \right\|}{t}=0,$$
see also Corollary~14 in~\cite{Sklyar3}.
Therefore the first part of the Theorem follows from the well-known representation of the resolvent of the generator in the form of the Laplace transform of the $C_0$-semigroup (group):
\begin{equation}\label{representation res}
\left(A_k- \lambda I \right)^{-1}=\int\limits_{0}^{\infty} e^{-\lambda t} e^{A_k t} dt,\quad \left|\Re(\lambda)\right|>0,
\end{equation}
see, e.g., Theorem~11 in~\cite{Dunford},
see also Theorem~2.1 of~\cite{Eisner1}.

2. Follows from Theorem~2.6 of~\cite{Eisner2}.

3. Follows from Theorem~3 of~\cite{Malejki}.
\end{proof}

\begin{rem}
Consider the case $k=1$ in Theorem~\ref{res asympt}.
Then the conclusion of the Theorem~\ref{res asympt} follows from the proof of the Theorem~\ref{Expl}.
Indeed, by~(\ref{alambda}) we have that for all $\lambda \in  \rho\left(A_1\right)=\mathbb{C}\setminus \{i f(n)\}_{1}^{\infty}$
$$\left\|\left(A_1- \lambda I \right)^{-1}\right\| \leq \sqrt{\frac{2M}{m}}\frac{\sqrt{\left(\inf\limits_{n\in \mathbb{N}} |if(n)-\lambda|\right)^2+4 \left(\sup\limits_{n\in\mathbb{N}} n|\Delta f(n)| \right)^2}}{\left(\inf\limits_{n\in \mathbb{N}} |if(n)-\lambda|\right)^2},
$$
which obviously leads to estimates~(\ref{r est}) and~(\ref{r est 2}).
Thus, in general, the first part of the Theorem~\ref{res asympt} may be verified by direct computations and subtle estimates based on the Hardy inequality~(\ref{Hardy}) similar to those provided by the proof of the Theorem~\ref{Expl}.
\end{rem}

For the case of generators of $C_0$-groups acting on a class of Banach spaces $\ell_{p,k}\left(\{e_n\}\right),$ $p>1,$ $k\in\mathbb{N}$, we have the following result, analogous to the first part of the Theorem~\ref{res asympt}.
\begin{thm}\label{res asympt p}
Let $k\in\mathbb{N}$, $p>1,$ and $\widetilde{A_k}:\ell_{p,k}\left(\{e_n\}\right) \supset D\left(\widetilde{A_k}\right) \mapsto \ell_{p,k}\left(\{e_n\}\right)$ be the operator defined by~(\ref{operator on Banach space}),~(\ref{Domain_k p}), see also Theorem~\ref{Expl B}.

Then
for every $a>0$ there exists a constant $C>0$ such that
\begin{equation}\label{r est p}
\left\|\left(\widetilde{A_k}- \lambda I \right)^{-1}\right\|\leq \frac{C}{\left|\Re(\lambda)\right|^{k+1}}\quad\text{for all}\quad \lambda: 0<\left|\Re(\lambda)\right|<a,
\end{equation}
\begin{equation}\label{r est 2 p}
\left\|\left(\widetilde{A_k}- \lambda I \right)^{-1}\right\|\leq C\quad\text{for all}\quad \lambda:\left|\Re(\lambda)\right| \geq a.\qquad\qquad\:
\end{equation}
\end{thm}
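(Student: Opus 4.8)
The plan is to follow the proof of the first part of Theorem~\ref{res asympt} almost verbatim, replacing the Hilbert-space ingredients by their Banach-space analogues. By Theorem~16 of~\cite{Sklyar3} the operator $\widetilde{A_k}$ generates the $C_0$-group $\{e^{\widetilde{A_k}t}\}_{t\in\mathbb{R}}$ on $\ell_{p,k}\left(\{e_n\}\right)$ acting by~(\ref{group}), and this group is polynomially bounded: there is a polynomial $\mathfrak{q}_k$ with nonnegative coefficients, $\deg\mathfrak{q}_k=k$ and leading coefficient positive, such that $\left\|e^{\widetilde{A_k}t}\right\|\leq\mathfrak{q}_k(|t|)$ for every $t\in\mathbb{R}$. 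This is the Banach-space counterpart of Proposition~12 in~\cite{Sklyar3}; it is proved by the same argument, the diagonal action of $e^{\widetilde{A_k}t}$ in the basis $\{e_n\}$ being controlled by the combinatorial estimates on shifted coefficient sequences from the proof of Theorem~11 of~\cite{Sklyar3} together with the two-sided symmetric-basis inequalities $\widetilde{m}\|\widetilde y\|^p\leq\sum|\alpha_n|^p\leq\widetilde M\|\widetilde y\|^p$ used in the proof of Theorem~\ref{Expl B} in place of the Riesz-basis bounds~(\ref{Riesz}). In particular the growth bound of the group is zero, and the backward semigroup $e^{-\widetilde{A_k}t}=e^{\widetilde{A_k}(-t)}$ obeys the same bound $\mathfrak{q}_k(|t|)$.

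The next step is the Laplace-transform representation of the resolvent. For $\Re(\lambda)>0$ one applies~(\ref{representation res}) to $\widetilde{A_k}$, and for $\Re(\lambda)<0$ one applies it to $-\widetilde{A_k}$ with $-\lambda$ in place of $\lambda$; in both cases the integrand is dominated by $e^{-|\Re(\lambda)|t}\mathfrak{q}_k(t)$, so for every $\lambda$ with $\Re(\lambda)\neq0$
\begin{equation*}
\left\|\left(\widetilde{A_k}-\lambda I\right)^{-1}\right\|\leq\int_0^\infty e^{-|\Re(\lambda)|t}\,\mathfrak{q}_k(t)\,dt=\sum_{j=0}^{k}\frac{b_j\,j!}{|\Re(\lambda)|^{\,j+1}},
\end{equation*}
where $b_0,\dots,b_k\geq0$ are the coefficients of $\mathfrak{q}_k$ and $b_k>0$.

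Both assertions now drop out of this one estimate. If $|\Re(\lambda)|\geq a$ then each summand is at most $b_j j!/a^{j+1}$, so the whole sum is bounded by a constant depending only on $a$ and $\mathfrak{q}_k$, giving~(\ref{r est 2 p}). If $0<|\Re(\lambda)|<a$ then $|\Re(\lambda)|^{-(j+1)}\leq a^{k-j}|\Re(\lambda)|^{-(k+1)}$ for each $0\leq j\leq k$, hence the sum is at most $C|\Re(\lambda)|^{-(k+1)}$ with $C=\sum_{j=0}^{k}b_j j!\,a^{k-j}$, which is~(\ref{r est p}).

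The only genuine obstacle is the polynomial boundedness of $\{e^{\widetilde{A_k}t}\}$ on $\ell_{p,k}\left(\{e_n\}\right)$; the rest is the elementary computation above. I expect this to carry over from the Hilbert-space case without new ideas, because the group is diagonal in $\{e_n\}$ and the norm on $\ell_{p,k}$ is, up to the symmetric-basis constants, the $\ell_p$-norm of the $k$-th difference of the coefficient sequence, so the estimates of~\cite{Sklyar3} apply after the obvious substitution of constants. If one prefers to avoid invoking polynomial boundedness, an alternative route --- indicated in the Remark following Theorem~\ref{res asympt} for $k=1$ --- is to estimate the explicit resolvent~(\ref{resolvent not basis fk p}) directly, splitting the shifted quotients by telescoping as in the proof of Theorem~\ref{Expl} and absorbing the resulting averaged sums by the discrete Hardy inequality~(\ref{Hardy}) with its sharp constant $(p/(p-1))^p$; this is more computational and requires the combinatorial bookkeeping of Theorem~11 of~\cite{Sklyar3} for general $k$, but it is self-contained.
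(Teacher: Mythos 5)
Your proposal is correct and follows essentially the same route as the paper: polynomial boundedness of the group $\{e^{\widetilde{A_k}t}\}_{t\in\mathbb{R}}$ of degree $k$ (which the paper simply cites as Proposition~17 of~\cite{Sklyar3}, so you need not re-derive it) combined with the Laplace-transform representation~(\ref{representation res}) of the resolvent. Your explicit evaluation of $\int_0^\infty e^{-|\Re(\lambda)|t}\mathfrak{q}_k(t)\,dt$ just fills in the elementary computation that the paper delegates to Theorem~2.1 of~\cite{Eisner1}.
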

\begin{proof}
Denote by $\left\{e^{\widetilde{A_k} t} \right\}_{t\in \mathbb{R}}$ the $C_0$-group corresponding to the operator $\widetilde{A_k}$, see Theorem~16 of~\cite{Sklyar3}. Then, by virtue of Proposition~17 from~\cite{Sklyar3}, there exists a polynomial $\mathfrak{p}_k$ with positive coefficients such that
$$\deg \mathfrak{p}_k =k$$
and for every $t\in\mathbb{R}$ we have
$$\left\|e^{\widetilde{A_k} t}\right\| \leq \mathfrak{p}_k (|t|).$$
Therefore the required estimates follow from the formula for representation of the resolvent~(\ref{representation res}), see also Theorem~2.1 of~\cite{Eisner1}.
\end{proof}

The weak spectral mapping theorem holds for our classes of $C_0$-groups, since they are polynomially bounded.
\begin{cor}\label{wsmp}
Let $k\in\mathbb{N}$, $p>1,$ $\left\{e^{A_k t} \right\}_{t\in \mathbb{R}}$ is the $C_0$-group corresponding to the operator $A_k$, see Theorem~\ref{construction},
and $\left\{e^{\widetilde{A_k} t} \right\}_{t\in \mathbb{R}}$ is the $C_0$-group corresponding to the operator $\widetilde{A_k}$, see~(\ref{group}), Section~3.

Then for all $t\in\mathbb{R}$
$$\sigma\left(e^{A_k t}\right) = \overline{e^{t\sigma(A_k)}},$$
$$\sigma\left(e^{\widetilde{A_k} t}\right) = \overline{e^{t\sigma\left(\widetilde{A_k}\right)}}.$$
\end{cor}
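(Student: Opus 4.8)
The plan is to obtain Corollary~\ref{wsmp} by combining the always-valid spectral inclusion for $C_0$-semigroups with the weak spectral mapping theorem for non-quasianalytic $C_0$-groups, whose hypothesis will be guaranteed by the polynomial boundedness of $\{e^{A_k t}\}_{t\in\mathbb{R}}$ and $\{e^{\widetilde{A_k} t}\}_{t\in\mathbb{R}}$ recorded in Propositions~12 and~17 of~\cite{Sklyar3}. First I would settle the inclusion $\overline{e^{t\sigma(A_k)}}\subseteq\sigma(e^{A_k t})$: since each $e_n$ is an eigenvector of $e^{A_k t}$ with eigenvalue $e^{itf(n)}$ (take $x=e_n$ in the formula for $e^{A_k t}$ in Theorem~\ref{construction}) and $\sigma(A_k)=\sigma_p(A_k)=\{if(n)\}_{n=1}^{\infty}$ by Theorem~\ref{Expl}~(i), we get $e^{t\sigma(A_k)}=\{e^{itf(n)}\}_{n=1}^{\infty}\subseteq\sigma_p(e^{A_k t})$, and taking closures --- the spectrum of a bounded operator being closed --- yields $\overline{e^{t\sigma(A_k)}}\subseteq\sigma(e^{A_k t})$. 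The same reasoning, now based on Theorem~\ref{Expl B}~(i), gives $\overline{e^{t\sigma(\widetilde{A_k})}}\subseteq\sigma(e^{\widetilde{A_k} t})$.

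For the reverse inclusions I would verify that the two groups are non-quasianalytic, i.e.
\[
\int_{-\infty}^{\infty}\frac{\ln^{+}\|e^{A_k t}\|}{1+t^2}\,dt<\infty
\qquad\text{and}\qquad
\int_{-\infty}^{\infty}\frac{\ln^{+}\|e^{\widetilde{A_k} t}\|}{1+t^2}\,dt<\infty .
\]
Both integrals are finite thanks to the polynomial estimates $\|e^{A_k t}\|\le\mathfrak{p}_k(|t|)$ and $\|e^{\widetilde{A_k} t}\|\le\mathfrak{p}_k(|t|)$ with $\deg\mathfrak{p}_k=k$ (Propositions~12 and~17 of~\cite{Sklyar3}, already invoked in the proofs of Theorems~\ref{res asympt} and~\ref{res asympt p}): indeed $\ln^{+}\mathfrak{p}_k(|t|)\le c_1+c_2\ln(1+|t|)$ for suitable constants $c_1,c_2>0$, and $\int_{-\infty}^{\infty}(c_1+c_2\ln(1+|t|))(1+t^2)^{-1}\,dt<\infty$. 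Hence both $C_0$-groups belong to the class for which the weak spectral mapping theorem is known to hold, so $\sigma(e^{A_k t})\subseteq\overline{e^{t\sigma(A_k)}}$ and $\sigma(e^{\widetilde{A_k} t})\subseteq\overline{e^{t\sigma(\widetilde{A_k})}}$ for every $t\in\mathbb{R}$; together with the first step this yields the two displayed identities.

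The step I expect to be the main obstacle is the precise invocation of the weak spectral mapping theorem for non-quasianalytic groups: one has to use a formulation valid for $C_0$-groups on arbitrary Banach spaces, so that it also covers $\ell_{p,k}(\{e_n\})$. As a consistency check one may note that $\sigma(e^{A_k t})$ in fact lies on the unit circle --- this follows from the spectral radius formula together with $\|e^{A_k(nt)}\|\le\mathfrak{p}_k(n|t|)$ applied to $e^{A_k t}$ and to $e^{-A_k t}$ --- in accordance with $\overline{e^{t\sigma(A_k)}}\subseteq\mathbb{T}$. If a self-contained argument is preferred, the reverse inclusion can instead be extracted directly from the explicit resolvent formula of Theorem~\ref{Expl}: for $\mu\in\mathbb{T}$ with $\varrho\bigl(\mu,\overline{\{e^{itf(n)}\}_{n=1}^{\infty}}\bigr)>0$ one shows $\mu\in\rho(e^{A_k t})$ by bounding $\|(e^{A_k t}-\mu I)^{-1}\|$ through a Hardy-inequality computation parallel to that in the proof of Theorem~\ref{Expl}.
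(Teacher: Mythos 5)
Your proposal is correct and follows essentially the same route as the paper: the paper's proof likewise deduces non-quasianalyticity from the polynomial bounds of Propositions~12 and~17 of~\cite{Sklyar3} and then applies the weak spectral mapping theorem for non-quasianalytic $C_0$-groups (Theorem~7.4 of~\cite{Nagel}, which is stated for Banach spaces and already contains the easy inclusion you verify separately via eigenvectors). Your explicit check of the integrability of $\ln^{+}\mathfrak{p}_k(|t|)/(1+t^2)$ is just a spelled-out version of the remark the paper makes immediately after the corollary.
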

\begin{proof}
Propositions~12 and~17 from~\cite{Sklyar3} yield that $\left\{e^{A_k t} \right\}_{t\in \mathbb{R}}$ and $\left\{e^{\widetilde{A_k} t} \right\}_{t\in \mathbb{R}}$  are polynomially bounded $C_0$-groups. The application of Theorem~7.4 from~\cite{Nagel} (p.~91) completes the proof.
\end{proof}

A measurable and locally bounded function $f:\mathbb{R}\mapsto \mathbb{R}$ is called by a non-quasianalytic weight provided that for all $t,s\in\mathbb{R}$ we have
$$f(t)\geq 1,\quad f(t+s)\leq f(t) f(s) \quad\text{and} \quad\int\limits_{-\infty}^{\infty} \frac{\ln f(t)}{1+t^2}dt<\infty.$$

Clearly, every polynomially bounded  $C_0$-group  satisfy the non-quasianalytic growth condition, i.e. the condition
$$\left\|T(t)\right\| \leq f(t),\quad t\in\mathbb{R},$$
where $f$ is a  non-quasianalytic weight.
Note that there exist $C_0$-groups $\left\{T(t) \right\}_{t\in \mathbb{R}}$, which do not satisfy the non-quasianalytic growth condition,
such that the corresponding generators have empty spectrum.
Thus the weak spectral mapping theorem does not hold in the class of $C_0$-groups, which do not satisfy the non-quasianalytic growth condition. This result was first proved in~\cite{Huang}.
For details see also Chapter~2, Section~2.4 of~\cite{vanNeerven}.


\noindent\textit{Acknowledgments.}
The research was partially supported by the State Fund For Fundamental Research of Ukraine (project no. $\Phi83/82-2018$).

\end{document}